\newtheorem{thm}{Theorem}
\newtheorem{cor}[thm]{Corollary}
\newtheorem{lem}[thm]{Lemma}
\theoremstyle{definition}
\newcommand{\nt}[1]{\marginpar{\tiny #1}}
\renewcommand{\nt}[1]{}
\newcommand{\set}[1]{\left\{#1\right\}}
\newcommand{\T}{\mathcal{T}}
\newcommand{\Tn}{\T_n}
\newcommand{\Tnd}{\Tn^{(d)}}
\newcommand{\FC}{\mathcal{FC}}
\newcommand{\FCn}{\FC_n}
\newcommand{\FCnd}{\FCn^{(d)}}
\newcommand{\oFCnd}{\overline{\FC}{}_n^{(d)}}
\newcommand{\V}{\mathcal{V}}
\newcommand{\Vn}{\V_n}
\newcommand{\Vnd}{\Vn^{(d)}}
\DeclareMathOperator{\Cat}{Cat}
\begin{document}

\author{Sangwook Kim}
\address[Sangwook Kim]{Department of Mathematics, Chonnam National University, Gwangju 61186, Korea}
\email{swkim.math@jnu.ac.kr}

\author{Seunghyun Seo}
\address[Seunghyun Seo]{Department of Mathematics Education, Kangwon National University, Chuncheon 24341, Korea}
\email{shyunseo@kangwon.ac.kr}

\author{Heesung Shin$^\dag$}
\address[Heesung Shin]{Department of Mathematics, Inha University, Incheon 22212, Korea}
\email{shin@inha.ac.kr}
\thanks{\dag Corresponding author}

\title[Refined enumeration of vertices among all rooted ordered $d$-trees]%
{Refined enumeration of vertices among all rooted ordered $d$-trees}

\date{\DTMnow}

\begin{abstract}
In this paper we enumerate the cardinalities for the set of all vertices of outdegree $\ge k$ at level $\ge \ell$ among all rooted ordered $d$-trees with $n$ edges.
Our results unite and generalize several previous works in the literature.
\end{abstract}

\maketitle

\section{Introduction}
\label{sec:intro}

For a positive integer $d$, the $n$th $d$-Fuss-Catalan number is given by
$$
\Cat_n^{(d)} = \frac{1}{dn + 1} \binom{(d+1)n}{n},\quad\text{for $n\ge0$.}
$$
They are generalization of well-known Catalan numbers.
Like Catalan numbers, there are several classes which are enumerated by Fuss-Catalan numbers.
Most well-known class is Fuss-Catalan paths.
A $d$-Fuss-Catalan path of length $(d+1)n$ is a lattice path from $(0,0)$ to $((d+1)n, 0)$
using up steps $(1,d)$ and down steps $(1,-1)$ such that it stays weakly above the $x$-axis.
Denote by $\FCnd$ the set of $d$-Fuss-Catalan paths of length $(d+1)n$.
Another example is dissections of a $(dn+2)$-gon into $(d+2)$-gons by diagonals.
There are
three more classes which are enumerated by $d$-Fuss-Catalan numbers.

\subsection*{Rooted ordered $d$-trees}
A rooted tree can be considered as a process of successively gluing an edge (1-simplex) to a vertex (0-simplex) from the root in a half-plane,
where the root is fixed in the line (1-dimensional hyperplane)
as the boundary of the given half-plane.
In same way, we can define a \emph{rooted $d$-tree} in $(d+1)$-dimensional lower Euclidean half-space $\mathbb{R}^{d+1}_{-}$ as follows:
The root $\mathbf{r}$ is a $(d-1)$-simplex fixed in the boundary of $\mathbb{R}^{d+1}_{-}$.
From the root $(d-1)$-simplex $\mathbf{r}$, we glue $d$-simplices (as edges) successively to one of previous $(d-1)$-simplices (as vertices) in $\mathbb{R}^{d+1}_{-}$.
(See \cite[$d$-dimensional trees]{BP69}.)
By definition, if $d=1$, a rooted $d$-tree is a rooted tree.

In a rooted tree, we can consider a \emph{linear order} among all edges having one common vertex by their positions and such a tree is called a \emph{rooted ordered tree}. Similarly, in higher dimensional cases, we can also give a linear order among $d$-simplices having one common $(d-1)$-simplices \emph{naturally} by their positions and such a tree is also called a \emph{rooted ordered $d$-tree}.
Jani, Rieper and Zeleke \cite{JRZ02} enumerate ordered $K$-trees,
which is obained in a similar way using $d$-simplices with $d \in K$.
\nt{They are using $(d+1)$-gons instead of $d$-simplices.
Also, they called $(d+1)$-trees instead of $d$-trees.}

\subsection*{Rooted $d$-ary cacti}
A \emph{cactus} is a connected simple graph in which each edge is
contained in exactly one elementary cycle.
These graphs are also known as ``Husimi trees''.
They are introduced by Harary and Uhlenbeck \cite{HU53}.
If each elementary cycle has exactly $d$ edges, a cactus is called
a $d$-ary cactus.
B\'{o}na et al. \cite{BBLL00} provide enumerations of various classes of $d$-ary cacti.

\subsection*{Rooted $d$-tuplet trees}
Instead of $d$-simplices used in rooted ordered $d$-trees, we may use $(d+1)$-gons. 
A root is a vertex fixed in the bounding hyperplane of a half-plane.
One can glue $(d+1)$-gons to a vertex from the root.
A tree obtained in this way is called a \emph{rooted $d$-tuplet tree} and the $(d+1)$-gons are called \emph{$d$-tuplets}.
As there is a \emph{linear order} on the vertices in a tuplet,
one can show that there is a one-to-one correspondence
between rooted ordered $d$-trees with $n$ edges 
and rooted $d$-tuplet trees with $n$ tuplets.
Thus rooted ordered $d$-trees and rooted $d$-tuplet trees are essentially the same.
Note that the underlying graph of a $d$-tuplet tree is a $(d+1)$-ary cactus.

Let $\mathcal{T}_n^{(d)}$ be the set of rooted $d$-tuplet trees with $n$ tuplets.
It is easy to see that the cardinality of $\mathcal{T}_n^{(d)}$ is the $n$th $d$-Fuss-Catalan number $\Cat_{n}^{(d)}$.
For example, there are 22 rooted $3$-tuplet trees with $3$ tuplets, see Figure~\ref{fig:tree}.
\begin{figure}[t]
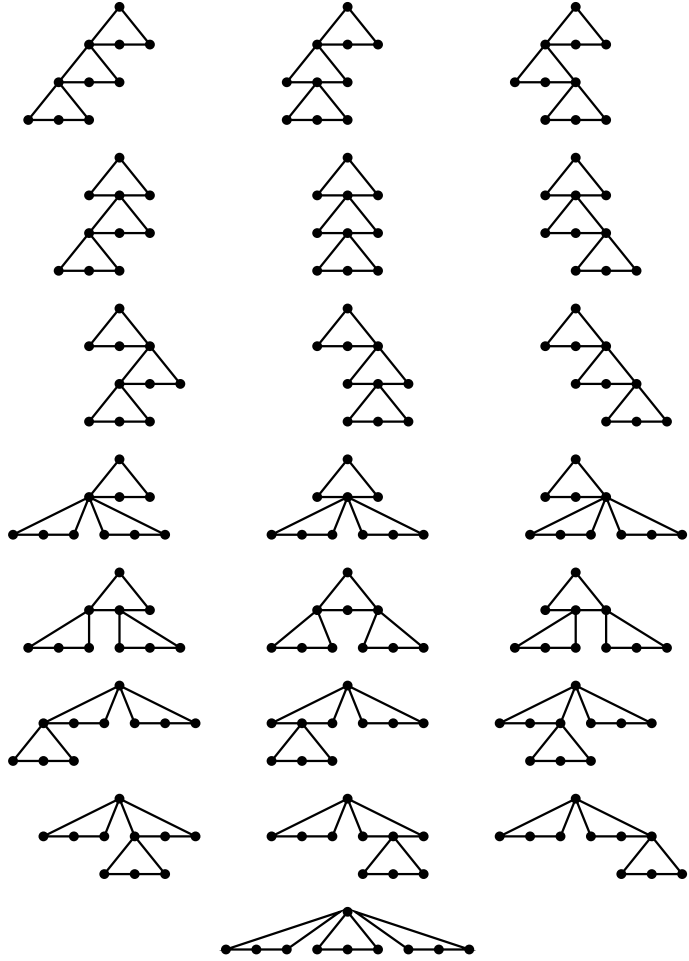

\centering
%
\caption{All rooted $3$-tuplet trees with 3 tuplets in $\mathcal{T}_3^{(3)}$}
\label{fig:tree}
\end{figure}
Clearly the number of vertices among rooted $d$-tuplet tree with $n$ tuplets in $\Tnd$ is
\begin{align}
(dn+1)\Cat_{n}^{(d)} = \binom{(d+1)n}{n}.
\label{eq:all}
\end{align}


In a rooted $d$-tuplet tree,
the \emph{degree} of a vertex is the number of tuplets it connects.
We can have the notion of the \emph{outdegree} of a vertex $v$, which is the number of tuplets starting at $v$ and pointing away from the root.
The \emph{level} of a vertex $v$ in a rooted $d$-tuplet tree is the
distance (number of tuplets) from the root to $v$.
Table~\ref{table:vertices} shows
the number of all vertices of outdegree $k$ at level $\ell$ among all rooted $3$-tuplet trees in $\T_3^{(3)}$.
\begin{table}[t]
\begin{tabular}{c|cccc|c}
$\ell \backslash k$&0&1&2&3 & $\sum$\\ \hline
0&  0&  15& 6&  1&  22\\
1&  66& 21& 3&  0&  90\\
2&  72& 9&  0&  0&  81\\
3&  27& 0&  0&  0&  27\\ \hline
$\sum$ & 165 & 45 & 9 & 1 & 220
\end{tabular}
\caption{The number of vertices of outdegree $k$ at level $\ell$ among all rooted $3$-tuplet trees in $\T_3^{(3)}$}
\label{table:vertices}
\end{table}
For example, there are 9 vertices of outdegree 1 at level 2 in $\T_3^{(3)}$,
see Figure~\ref{fig:tree}.

In a rooted $d$-tuplet tree,
there exists the unique vertex $u$ in each tuplet
such that its level is less than levels of the other vertices $v_1, \dots, v_{d}$.
Here, $u$ is called the \emph{parent} of $v_i$'s and each $v_i$ is called a \emph{child} of $u$.
For each vertex $v$ (except the root), there exists the unique tuplet containing $v$ toward the root, called the \emph{tuplet of $v$}.
Vertices with the same parent are called \emph{siblings}.
For two silbings $v$ and $w$, if $v$ is on the left of $w$,
$v$ is called an \emph{elder} sibling of $w$ and $w$ is called a \emph{younger} sibling of $v$.

Recently Eu, Seo, and Shin \cite{ESS17} gave a formula for the number of vertices among all trees in the set of rooted ordered trees under some conditions.
\begin{thm}[Eu, Seo, and Shin, 2017]
\label{thm:ESS17}
Given $n \ge 1$,
for any nonnegative integers $k$ and $\ell$,
the number of all vertices of outdegree $\ge k$ at level $\ge \ell$
among all rooted ordered trees with $n$ edges is
\begin{align}
\label{eq:lem}
\binom{2n -k }{ n+\ell}.
\end{align}
\end{thm}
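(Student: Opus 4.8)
The plan is to prove Theorem~\ref{thm:ESS17} by a generating-function computation based on the standard ``spine decomposition'' of a tree along the path from the root to a distinguished vertex; all binomial coefficients are taken in the combinatorial sense, vanishing outside the range $0\le b\le a$. Let $C=C(x)=\sum_{n\ge0}\Cat_n^{(1)}x^n$ be the generating function for rooted ordered trees counted by number of edges, so that $C=1/(1-xC)$, equivalently $C=1+xC^2$. Then an \emph{ordered forest}---a finite, possibly empty, sequence of rooted ordered trees, each joined to a common vertex by one extra edge---has generating function $1/(1-xC)=C$, and such a forest with at least $k$ components has generating function $(xC)^k/(1-xC)=(xC)^kC$.

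I would then count the pairs $(T,v)$ in which $T$ is a rooted ordered tree with $n$ edges and $v$ is a vertex of $T$ of outdegree $\ge k$ at level $\ge\ell$. Fixing the level of $v$ to be exactly $m\ge\ell$, decompose $T$ along the path $u_0,u_1,\dots,u_m=v$, where $u_0$ is the root. For each $i=1,\dots,m$, the $i$-th edge $u_{i-1}u_i$ (weight $x$) carries the ordered forest of subtrees of $u_{i-1}$ lying to the left of $u_i$ (weight $C$) and the one lying to its right (weight $C$), contributing $xC^2$ in all; and $v=u_m$ carries an ordered forest of at least $k$ subtrees, contributing $(xC)^kC$. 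Hence the generating function for such pointed trees with $v$ at level exactly $m$ is $(xC^2)^m(xC)^kC$, and summing the geometric series over $m\ge\ell$ gives
\begin{align*}
F(x)=\sum_{m\ge\ell}(xC^2)^m(xC)^kC=\frac{x^{k+\ell}\,C^{2\ell+k+1}}{1-xC^2}.
\end{align*}

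It remains to show $[x^n]F(x)=\binom{2n-k}{n+\ell}$. Expanding $1/(1-xC^2)=\sum_{j\ge0}x^jC^{2j}$ reduces this to the finite sum $\sum_{j\ge0}[x^{p-j}]C^{2\ell+k+1+2j}$ with $p=n-k-\ell$, which I would evaluate with the classical Lagrange-inversion identity $[x^m]C^r=\frac{r}{2m+r}\binom{2m+r}{m}$. Setting $Q=2n-k+1$, the sum becomes $\frac1Q\sum_{i=0}^{p}(Q-2i)\binom{Q}{i}$; using $i\binom{Q}{i}=Q\binom{Q-1}{i-1}$ together with Pascal's rule, the two partial sums telescope and leave $\binom{Q-1}{p}=\binom{2n-k}{n-k-\ell}=\binom{2n-k}{n+\ell}$. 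The main obstacle is precisely this last step: the telescoping is not obvious at first sight and is easily wrecked by an off-by-one in the summation limits, whereas the spine decomposition itself is routine once one notes that the ``side-forest'' generating function is exactly $C$.
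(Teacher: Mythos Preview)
Your argument is correct: the spine decomposition gives exactly $F(x)=x^{k+\ell}C^{2\ell+k+1}/(1-xC^2)$, the Lagrange-inversion coefficient $[x^m]C^r=\frac{r}{2m+r}\binom{2m+r}{m}$ is standard, and the telescoping follows from the identity $(Q-2i)\binom{Q}{i}=Q\bigl[\binom{Q-1}{i}-\binom{Q-1}{i-1}\bigr]$, which you essentially state.

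However, your route is genuinely different from the paper's. The paper does not compute a generating function at all; it establishes the $d=1$ case as a special instance of an explicit bijection $\Phi$ between the set $\mathcal{V}$ of pointed trees $(T,v)$ and a product $\mathcal{P}\times\mathcal{L}$, where $\mathcal{L}$ is the set of free lattice paths from $(k,dk)$ to $((d+1)n,-(d+1)\ell)$. The bijection walks down the spine from $v$ to the root, records position data (the factor $d^\ell$, trivial when $d=1$), cuts the tree into $\ell+2$ pieces $D_v,R_1,\dots,R_{\ell-1},R_{\ell'},L$, encodes each piece as a Dyck-type path via one of three maps $\varphi,\overline{\varphi},\psi$, and concatenates them with separating down-steps; stripping $k$ forced up-steps at the start and $\ell+1$ forced steps at the end leaves exactly a free lattice path.

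What each approach buys: your generating-function method is short and mechanical once one knows Lagrange inversion, and the final telescoping is tidy; its drawback is that the simple binomial answer emerges only after a nontrivial summation, and extending it to $d$-tuplet trees would require redoing the coefficient extraction with the Fuss--Catalan series. The paper's bijection explains the binomial coefficient \emph{directly} as a count of unconstrained lattice paths, gives a combinatorial meaning to every ingredient, and is designed so that the same construction generalises verbatim to arbitrary $d$, producing the factor $d^\ell\binom{(d+1)n-k}{dn+\ell}$ of the main theorem.
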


We give a generalization of the formula \eqref{eq:lem} for $\Tnd$ by generalizing their bijection.
\begin{thm}[Main Result]
\label{thm:main1}
Given $n \ge 1$, for any nonnegative integers $k$ and $\ell$, the number of all vertices of outdegree $\ge k$ at level $\ge \ell$
among all rooted $d$-tuplet trees with $n$ tuplets is
\begin{align}
\label{eq:KSS16}
d^\ell \binom{(d+1)n-k}{dn+\ell}.
\end{align}
\end{thm}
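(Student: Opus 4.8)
The plan is to prove \eqref{eq:KSS16} by a generating-function computation organized around a ``spine decomposition'' of pointed $d$-tuplet trees, followed by coefficient extraction via Lagrange inversion. Let $T=T(x)=\sum_{n\ge0}\Cat_n^{(d)}x^n$ be the generating function enumerating $\Tnd$ by the number of tuplets. The leftmost-tuplet decomposition --- a nontrivial $d$-tuplet tree has, at its root, a leftmost tuplet carrying $d$ subtrees, followed by the $d$-tuplet tree formed by the root together with its remaining tuplets --- gives the Fuss--Catalan equation $T=1+xT^{d+1}$, equivalently $T=1/(1-xT^{d})$. For nonnegative integers $k,\ell$, let $A^{(k)}_{\ell}(x)$ be the generating function for pairs $(\mathcal T,v)$ with $\mathcal T\in\Tnd$ and $v$ a vertex of outdegree $\ge k$ at level $\ge\ell$, weighted by the number of tuplets of $\mathcal T$; the assertion becomes $[x^{n}]A^{(k)}_{\ell}=d^{\ell}\binom{(d+1)n-k}{dn+\ell}$.

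The key structural step is that a pointed $d$-tuplet tree is encoded uniquely by two independent pieces of data: (i) the subtree rooted at $v$, and (ii) the spine $\mathbf r=u_{\ell},\dots,u_{1},u_{0}=v$ from the root to $v$ decorated, at each $u_{i}$ with $1\le i\le\ell$, by the ordered block of tuplets of $u_{i}$ lying left of the tuplet containing $u_{i-1}$, that distinguished tuplet together with the position $p_{i}\in\{1,\dots,d\}$ of $u_{i-1}$ inside it and the $d-1$ off-spine subtrees hanging from it, and the ordered block of tuplets of $u_{i}$ lying to its right. A subtree at $v$ with $\outdeg(v)=m$ contributes $(xT^{d})^{m}$, so the outdegree-$\ge k$ condition contributes $\sum_{m\ge k}(xT^{d})^{m}=(xT^{d})^{k}T$. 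Each up-step $u_{i-1}\to u_{i}$ contributes $d\cdot T\cdot x\cdot T^{d-1}\cdot T=dxT^{d+1}$ (the factor $d$ from the choice of $p_{i}$, the two $T$'s from the left and right blocks of tuplets, and $xT^{d-1}$ from the distinguished tuplet and its off-spine subtrees), so the level-$\ge\ell$ condition contributes $\sum_{L\ge\ell}(dxT^{d+1})^{L}=(dxT^{d+1})^{\ell}/(1-dxT^{d+1})$. Multiplying,
\[
A^{(k)}_{\ell}(x)=\frac{(dxT^{d+1})^{\ell}}{1-dxT^{d+1}}\;(xT^{d})^{k}\,T .
\]
As sanity checks, $A^{(0)}_{0}=T/(1-dxT^{d+1})$ has $n$-th coefficient $\binom{(d+1)n}{n}$, matching \eqref{eq:all}, and $A^{(k)}_{\ell}=(dxT^{d+1})\,A^{(k)}_{\ell-1}$ for $\ell\ge1$, so one may reduce to $\ell=0$ if preferred.

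For the coefficient extraction, set $C:=xT^{d+1}=T-1$, so that $C=x(1+C)^{d+1}$ puts us in the Lagrange setting with $\phi(u)=(1+u)^{d+1}$ and $x=C/(1+C)^{d+1}$. Substituting $T=1+C$ and $x=C/(1+C)^{d+1}$ into the displayed formula and simplifying gives
\[
A^{(k)}_{\ell}=\frac{d^{\ell}\,C^{k+\ell}(1+C)^{1-k}}{1-dC};
\]
since $1-x\phi'(C)=1-(d+1)C/(1+C)=(1-dC)/(1+C)$, this is $A^{(k)}_{\ell}=d^{\ell}C^{k+\ell}(1+C)^{-k}\big/(1-x\phi'(C))$. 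The Lagrange--B\"urmann formula $[x^{n}]\,F(C)/(1-x\phi'(C))=[u^{n}]\,F(u)\phi(u)^{n}$, applied with $F(u)=d^{\ell}u^{k+\ell}(1+u)^{-k}$, then yields
\[
[x^{n}]A^{(k)}_{\ell}=d^{\ell}[u^{n}]\,u^{k+\ell}(1+u)^{(d+1)n-k}
=d^{\ell}\binom{(d+1)n-k}{\,n-k-\ell\,}=d^{\ell}\binom{(d+1)n-k}{dn+\ell},
\]
which is \eqref{eq:KSS16}.

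The main obstacle is the structural step: proving that the spine decomposition is an honest bijection --- that every pointed tree splits uniquely as in (i)--(ii), and that any decorated spine together with a subtree at $v$ reassembles to a genuine rooted $d$-tuplet tree with the marked vertex at exactly the prescribed level and outdegree. The delicate bookkeeping is the ordering of the left and right blocks of tuplets at each spine vertex, where an off-by-one or a double count could slip in; everything after that (the Fuss--Catalan equation, the algebraic simplification, and the Lagrange inversion) is routine. A fully bijective proof is also available by feeding the same spine decomposition into a cycle-lemma proof of Lagrange inversion, which is the natural way to see this as a genuine generalization of the Eu--Seo--Shin bijection behind Theorem~\ref{thm:ESS17}; I would develop that route only if a bijection were specifically wanted.
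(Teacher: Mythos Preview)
Your proof is correct. The spine decomposition is the right structural picture, the generating function
\[
A^{(k)}_{\ell}(x)=\frac{(dxT^{d+1})^{\ell}}{1-dxT^{d+1}}\,(xT^{d})^{k}\,T
\]
is set up correctly, and the Lagrange--B\"urmann extraction goes through as you wrote it. (A tiny notational slip: in describing the spine you index it as $u_{\ell},\dots,u_{0}$ and quantify ``each $u_{i}$ with $1\le i\le\ell$,'' whereas you then sum over spine lengths $L\ge\ell$; the formulas are for generic $L$ and are fine, only the prose should say $u_{L},\dots,u_{0}$ and $1\le i\le L$.)

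The paper takes a genuinely different, fully bijective route. Rather than passing through generating functions, it builds an explicit bijection $\Phi$ from the set of pointed trees $(T,v)$ to $\{0,\dots,d-1\}^{\ell}\times\mathcal L$, where $\mathcal L$ is a set of unconstrained lattice paths of the right length. The $\{0,\dots,d-1\}^{\ell}$ part records, for the last $\ell$ steps of the spine, the position of each $v_{i}$ inside its tuplet (your $p_{i}$'s, essentially), after which those $\ell$ spine vertices are swapped to the rightmost position in their tuplets. The tree is then cut into $\ell+2$ pieces (the descendant subtree $D_{v}$, subtrees $R_{1},\dots,R_{\ell-1},R_{\ell'}$ to the right of the spine, and a leftover tree $L$), and these pieces are concatenated---via three tree-to-path encodings $\varphi,\overline{\varphi},\psi$ and a cyclic rotation $\rho$ playing the role of the cycle lemma---into a single unconstrained lattice path. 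Your approach is shorter and more mechanical once one accepts Lagrange inversion; the paper's bijection is more explicit and makes visible exactly where the factor $d^{\ell}$ comes from, which feeds directly into the refinement in Theorem~\ref{thm:main2} (the cut-and-paste argument there reuses the same tree surgery). Your closing remark that a bijective proof should come from ``spine decomposition plus cycle lemma'' is on the mark; that is essentially what the paper does, with some extra care in how the pieces are glued.
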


We also find a refinement of the formula \eqref{eq:KSS16}.
\begin{thm}
\label{thm:main2}
Given $n \ge 1$, for any two nonnegative integers $i$, $j$, one nonnegative integer $k$ which is a multiple of $d$, and one positive integer $\ell$,
the number of all vertices
among all rooted $d$-tuplet trees with $n$ tuplets
such that
\begin{itemize}
\item having at least $i$ elder siblings,
\item having at least $j$ younger siblings,
\item having at least $k$ children,
\item at level $\ge \ell$
\end{itemize}
is
\begin{align}
d^\ell
\left(1- \frac{\beta}{d} \frac{dn+\ell}{(d+1)n-\alpha}\right)
\binom{(d+1)n-\alpha}{dn+\ell},
\label{eq:refinement}
\end{align}
where $\alpha$ and $\beta$ are nonnegative integers satisfying $i+j+k = \alpha d + \beta$ and $0\le \beta < d$.
\end{thm}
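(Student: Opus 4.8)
The plan is to prove this by a generating‑function computation: decompose a ``pointed'' tree along the root‑to‑$v$ path, read off the generating function in terms of the series for $\Tnd$, and extract the coefficient by Lagrange inversion. Let $A=A(x)$ count $\Tnd$ by number of tuplets, so that $A=1+xA^{d+1}$ (a rooted $d$-tuplet tree is the root together with an ordered list of tuplets, each tuplet carrying $d$ rooted $d$-tuplet subtrees in its non-root slots); in particular $xA^{d}=(A-1)/A$. Fix $n,i,j,k,\ell$ as in the statement and set $k_{0}=k/d$, an integer by hypothesis. We must count pairs $(T,v)$ with $T\in\Tnd$ and $v$ a vertex at level $\ge\ell$ having $\ge i$ elder siblings, $\ge j$ younger siblings, and $\ge k$ children; the last condition is equivalent to outdegree $\ge k_{0}$. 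Since $\ell\ge1$, such a $v$ is never the root, so it has a genuine parent.

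First I would decompose $(T,v)$ along the path $\mathbf r=w_{0},w_{1},\dots,w_{L}=v$, where $L=\lev(v)\ge\ell$. For $t<L$ the vertex $w_{t}$ carries an ordered list of tuplets, one of which (the \emph{path tuplet}) contains $w_{t+1}$, while the other $d-1$ slots of that tuplet and all $d$ slots of every other tuplet at $w_{t}$ root independent subtrees counted by $A$; the endpoint $v=w_{L}$ carries an ordered list of $\ge k_{0}$ tuplets, all of whose slots root subtrees. Summing over all choices, each of the $L-1$ unconstrained interior vertices $w_{0},\dots,w_{L-2}$ contributes the factor
\begin{align}
M:=\sum_{a\ge1}(ad)\,x^{a}A^{ad-1}=d(A-1),\nonumber
\end{align}
and the endpoint $v$ contributes $\sum_{b\ge k_{0}}(xA^{d})^{b}=(A-1)^{k_{0}}/A^{\,k_{0}-1}$.

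The sibling conditions enter only at the parent $u=w_{L-1}$, and this is the crux. If $u$ has outdegree $a$, list its children left to right: the $r$th child of the $q$th tuplet of $u$ is the $\bigl((q-1)d+r\bigr)$th child overall, hence has $(q-1)d+(r-1)$ elder siblings and $ad-1$ minus that many younger siblings. As $(q,r)$ runs over $\{1,\dots,a\}\times\{1,\dots,d\}$, the quantity $(q-1)d+(r-1)$ runs \emph{bijectively} over $\set{0,1,\dots,ad-1}$ (a mixed-radix expansion), so the number of admissible positions for $v$ below a parent of outdegree $a$ equals $\#\set{p:i\le p\le ad-1-j}=\max(0,\,ad-i-j)$. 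Thus $w_{L-1}$ contributes $\widetilde M:=\sum_{a\ge1}\max(0,ad-i-j)\,x^{a}A^{ad-1}$. Writing $i+j=s_{0}d+\beta$ with $0\le\beta<d$ (so that $\alpha=s_{0}+k_{0}$ in the notation of the theorem) and summing these geometric-type series, one gets, with the substitution $w:=A-1$ (so $w=x(1+w)^{d+1}$, $xA^{d}=w/(1+w)$, $M=dw$), the uniform closed form
\begin{align}
\widetilde M=\frac{w^{\,s_{0}+1}}{(1+w)^{\,s_{0}+1}}\bigl(d(1+w)-\beta\bigr).\nonumber
\end{align}

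Finally I would assemble $G=\sum_{L\ge\ell}M^{\,L-1}\,\widetilde M\cdot(A-1)^{k_{0}}/A^{\,k_{0}-1}$, rewrite it as the rational function of $w$
\begin{align}
G=\frac{d^{\,\ell-1}\,w^{\,\ell+\alpha}\bigl(d(1+w)-\beta\bigr)}{(1-dw)\,(1+w)^{\alpha}},\nonumber
\end{align}
and extract $[x^{n}]G$ by the Lagrange inversion formula $[x^{n}]F(w)=[w^{n}]\,F(w)\,(1+w)^{(d+1)n}\,\tfrac{1-dw}{1+w}$ attached to $\phi(w)=(1+w)^{d+1}$. The factor $1-dw$ cancels, leaving
\begin{align}
d^{\,\ell-1}[w^{\,n-\ell-\alpha}]\Bigl(d(1+w)^{(d+1)n-\alpha}-\beta(1+w)^{(d+1)n-\alpha-1}\Bigr)
=d^{\ell}\binom{(d+1)n-\alpha}{n-\ell-\alpha}-d^{\,\ell-1}\beta\binom{(d+1)n-\alpha-1}{n-\ell-\alpha};\nonumber
\end{align}
rewriting both binomials complementarily and using $\binom{N-1}{m-1}=\tfrac mN\binom Nm$ with $N=(d+1)n-\alpha$ and $m=dn+\ell$ turns this into exactly \eqref{eq:refinement}. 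The main obstacle is the parent-vertex step: one must recognize that the pair (tuplet index, within-tuplet index) encodes the elder-sibling count as a mixed-radix digit string, which collapses the double sum over positions to the single quantity $\max(0,ad-i-j)$; this is also where the hypotheses $d\mid k$ and $\ell\ge1$ are used (the former so that ``$\ge k$ children'' means ``$\ge k_{0}$ tuplets'', the latter so that the constrained parent $w_{L-1}$ exists and the decomposition is clean). Everything afterward is routine bookkeeping, the only care being to check the closed form for $\widetilde M$ in both cases $\beta=0$ and $\beta>0$, and that $[x^{n}]G$ specializes correctly to Theorem~\ref{thm:main1} when $i=j=0$.
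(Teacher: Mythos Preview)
Your argument is correct and complete: the path decomposition is set up properly, the mixed-radix observation that the elder-sibling count of the $r$th child of the $q$th tuplet is exactly $(q-1)d+(r-1)$ gives the right count $\max(0,ad-i-j)$ of admissible positions, the closed forms for $M$, $\widetilde M$, and $G$ check out, and the Lagrange--B\"urmann extraction with $\phi(w)=(1+w)^{d+1}$ and correction factor $(1-dw)/(1+w)$ produces precisely~\eqref{eq:refinement}.

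Your route is genuinely different from the paper's. The paper proceeds bijectively: it first proves the special case where $i,j,k$ are all multiples of $d$ by a cut-and-paste map $\gamma_{i,j}$ that transplants $i/d$ tuplets from the left of the parent and $j/d$ from the right down to $v$, reducing to Theorem~\ref{thm:main1}. For general $i,j$ it swaps $D_v$ with the descendant subtree of its $j$th younger sibling to reduce to $\Vnd(i+j,0,k;\ell)$, applies $\gamma_{i+j-\beta,0}$ to reach $\Vnd(\beta,0,\alpha d;\ell)$, and then uses a symmetry (swapping descendant subtrees of two siblings in the same tuplet) to show that the first differences $\#\Vnd(i,0,\alpha d;\ell)-\#\Vnd(i+1,0,\alpha d;\ell)$ are constant for $0\le i<d$; telescoping against the two known endpoint values from the lemma yields~\eqref{eq:refinement}. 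So the paper never touches generating functions here and leans entirely on the bijection behind Theorem~\ref{thm:main1}. Your approach, by contrast, is self-contained: it does not invoke Theorem~\ref{thm:main1} at all (indeed it rederives it as the $i=j=0$ case), and the only combinatorial input is the mixed-radix bookkeeping at the parent. What the paper's approach buys is an explicit bijective explanation of each reduction; what yours buys is a single uniform computation that handles all $\beta$ at once and avoids the somewhat delicate averaging step.
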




The rest of the paper is organized as follows.
In Section~\ref{sec:mainproof},
we show the Theorem~\ref{thm:main1} bijectively.
In Section~\ref{sec:calculation},
we give a combinatorial proof of the Theorem~\ref{thm:main2}.
In Section~\ref{sec:coro},
we present corollaries induced from Theorem~\ref{thm:main1} and \ref{thm:main2}.

\nt{Shin: We must write the outline.}

\section{A bijective proof of Theorem~\ref{thm:main1}}
\label{sec:mainproof}
Henceforth, a \emph{tree} is assumed to be a rooted $d$-tuplet tree.
Let $\mathcal{V}$ be the set of pairs $(T,v)$ such that $v$ is a vertex of outdegree $\ge k$ at level $\ge \ell$ in $T \in \T_n^{(d)}$.
Let $\mathcal{P}$ be the set of sequences in $\set{0,\dots, d-1}$ of length $\ell$.
Let $\mathcal{L}$ be the set of \emph{lattice paths} of length $((d+1)n-k)$ from $(k, dk)$ to $((d+1)n, -(d+1)\ell)$, consisting of $(n-k-\ell)$ up-steps along the vector $(1,d)$ and $(dn+\ell)$ down-steps along the vector $(1,-1)$.
In order to show Theorem~\ref{thm:main1}, it is enough to construct a bijection $\Phi$ between $\mathcal{V}$ and $\mathcal{P} \times \mathcal{L}$, due to
\begin{align*}
\# \mathcal{P} &= d^{\ell}, &
\# \mathcal{L} &= \binom{(d+1)n - k}{n -k -\ell,~ dn+\ell} = \binom{(d+1)n -k }{ dn+\ell }.
\end{align*}

\subsection*{Three bijections $\varphi$, $\overline{\varphi}$, and $\psi$}
Let a \emph{reverse $d$-Fuss-Catalan path} of length $(d+1)n$ be a lattice path from $(0,0)$ to $((d+1)n, 0)$
using up steps $(1,d)$ and down steps $(1,-1)$ such that it stays weakly below the $x$-axis.
Denote by $\oFCnd$ the set of reverse $d$-Fuss-Catalan paths of length $(d+1)n$.

Before constructing the bijection $\Phi$, we introduce three bijections 
\begin{align*}
\varphi&: \Tnd \to \FCnd, & \overline{\varphi}&: \Tnd \to \oFCnd, & \psi&: \Tnd \to \FCnd.
\end{align*}

The bijection $\varphi$ corresponds a tree to a lattice path weakly above the $x$-axis
by recording the steps when the tree is traversed in preorder:
whenever we go down a side of a tuplet, record an up-step along the vector $(1,d)$ and
whenever we go right or up a side of a tuplet, record a down-step along the vector $(1,-1)$.

Similarly, the bijection $\overline{\varphi}$ corresponds a tree to a lattice path weakly below the $x$-axis
by recording the steps when the tree is traversed in preorder:
whenever we go down or right a side, record an down-step along the vector $(1,-1)$ and
whenever we go up a side, record a up-step along the vector $(1,d)$.
An example of two bijections $\varphi$ and $\overline{\varphi}$ is shown in Figure~\ref{fig:varphi}.
\begin{figure}[t]
\centering
\begin{pgfpicture}{1.50mm}{3.00mm}{116.00mm}{37.00mm}
\pgfsetxvec{\pgfpoint{1.00mm}{0mm}}
\pgfsetyvec{\pgfpoint{0mm}{1.00mm}}
\color[rgb]{0,0,0}\pgfsetlinewidth{0.30mm}\pgfsetdash{}{0mm}
\pgfmoveto{\pgfxy(45.00,24.00)}\pgflineto{\pgfxy(55.00,26.00)}\pgfstroke
\pgfmoveto{\pgfxy(55.00,26.00)}\pgflineto{\pgfxy(52.12,26.14)}\pgflineto{\pgfxy(52.39,24.76)}\pgflineto{\pgfxy(55.00,26.00)}\pgfclosepath\pgffill
\pgfmoveto{\pgfxy(55.00,26.00)}\pgflineto{\pgfxy(52.12,26.14)}\pgflineto{\pgfxy(52.39,24.76)}\pgflineto{\pgfxy(55.00,26.00)}\pgfclosepath\pgfstroke
\pgfputat{\pgfxy(50.00,27.00)}{\pgfbox[bottom,left]{\fontsize{11.38}{13.66}\selectfont \makebox[0pt]{$\varphi$}}}
\pgfcircle[fill]{\pgfxy(20.00,30.00)}{0.50mm}
\pgfcircle[stroke]{\pgfxy(20.00,30.00)}{0.50mm}
\pgfcircle[fill]{\pgfxy(20.00,20.00)}{0.50mm}
\pgfcircle[stroke]{\pgfxy(20.00,20.00)}{0.50mm}
\pgfcircle[fill]{\pgfxy(16.00,20.00)}{0.50mm}
\pgfcircle[stroke]{\pgfxy(16.00,20.00)}{0.50mm}
\pgfcircle[fill]{\pgfxy(24.00,20.00)}{0.50mm}
\pgfcircle[stroke]{\pgfxy(24.00,20.00)}{0.50mm}
\pgfcircle[fill]{\pgfxy(12.00,20.00)}{0.50mm}
\pgfcircle[stroke]{\pgfxy(12.00,20.00)}{0.50mm}
\pgfcircle[fill]{\pgfxy(8.00,20.00)}{0.50mm}
\pgfcircle[stroke]{\pgfxy(8.00,20.00)}{0.50mm}
\pgfcircle[fill]{\pgfxy(4.00,20.00)}{0.50mm}
\pgfcircle[stroke]{\pgfxy(4.00,20.00)}{0.50mm}
\pgfcircle[fill]{\pgfxy(28.00,20.00)}{0.50mm}
\pgfcircle[stroke]{\pgfxy(28.00,20.00)}{0.50mm}
\pgfcircle[fill]{\pgfxy(32.00,20.00)}{0.50mm}
\pgfcircle[stroke]{\pgfxy(32.00,20.00)}{0.50mm}
\pgfcircle[fill]{\pgfxy(36.00,20.00)}{0.50mm}
\pgfcircle[stroke]{\pgfxy(36.00,20.00)}{0.50mm}
\pgfmoveto{\pgfxy(20.00,30.00)}\pgflineto{\pgfxy(4.00,20.00)}\pgflineto{\pgfxy(12.00,20.00)}\pgfclosepath\pgfstroke
\pgfmoveto{\pgfxy(20.00,30.00)}\pgflineto{\pgfxy(16.00,20.00)}\pgflineto{\pgfxy(24.00,20.00)}\pgfclosepath\pgfstroke
\pgfmoveto{\pgfxy(20.00,30.00)}\pgflineto{\pgfxy(28.00,20.00)}\pgflineto{\pgfxy(36.00,20.00)}\pgfclosepath\pgfstroke
\pgfcircle[fill]{\pgfxy(14.00,10.00)}{0.50mm}
\pgfcircle[stroke]{\pgfxy(14.00,10.00)}{0.50mm}
\pgfcircle[fill]{\pgfxy(10.00,10.00)}{0.50mm}
\pgfcircle[stroke]{\pgfxy(10.00,10.00)}{0.50mm}
\pgfcircle[fill]{\pgfxy(6.00,10.00)}{0.50mm}
\pgfcircle[stroke]{\pgfxy(6.00,10.00)}{0.50mm}
\pgfcircle[fill]{\pgfxy(18.00,10.00)}{0.50mm}
\pgfcircle[stroke]{\pgfxy(18.00,10.00)}{0.50mm}
\pgfcircle[fill]{\pgfxy(22.00,10.00)}{0.50mm}
\pgfcircle[stroke]{\pgfxy(22.00,10.00)}{0.50mm}
\pgfcircle[fill]{\pgfxy(26.00,10.00)}{0.50mm}
\pgfcircle[stroke]{\pgfxy(26.00,10.00)}{0.50mm}
\pgfmoveto{\pgfxy(16.00,20.00)}\pgflineto{\pgfxy(18.00,10.00)}\pgflineto{\pgfxy(26.00,10.00)}\pgfclosepath\pgfstroke
\pgfmoveto{\pgfxy(16.00,20.00)}\pgflineto{\pgfxy(6.00,10.00)}\pgflineto{\pgfxy(14.00,10.00)}\pgfclosepath\pgfstroke
\pgfcircle[fill]{\pgfxy(30.00,10.00)}{0.50mm}
\pgfcircle[stroke]{\pgfxy(30.00,10.00)}{0.50mm}
\pgfcircle[fill]{\pgfxy(34.00,10.00)}{0.50mm}
\pgfcircle[stroke]{\pgfxy(34.00,10.00)}{0.50mm}
\pgfcircle[fill]{\pgfxy(38.00,10.00)}{0.50mm}
\pgfcircle[stroke]{\pgfxy(38.00,10.00)}{0.50mm}
\pgfmoveto{\pgfxy(32.00,20.00)}\pgflineto{\pgfxy(30.00,10.00)}\pgflineto{\pgfxy(38.00,10.00)}\pgfclosepath\pgfstroke
\pgfmoveto{\pgfxy(60.00,20.00)}\pgflineto{\pgfxy(60.00,35.00)}\pgfstroke
\pgfmoveto{\pgfxy(60.00,35.00)}\pgflineto{\pgfxy(59.30,32.20)}\pgflineto{\pgfxy(60.00,35.00)}\pgflineto{\pgfxy(60.70,32.20)}\pgflineto{\pgfxy(60.00,35.00)}\pgfclosepath\pgffill
\pgfmoveto{\pgfxy(60.00,35.00)}\pgflineto{\pgfxy(59.30,32.20)}\pgflineto{\pgfxy(60.00,35.00)}\pgflineto{\pgfxy(60.70,32.20)}\pgflineto{\pgfxy(60.00,35.00)}\pgfclosepath\pgfstroke
\pgfmoveto{\pgfxy(58.00,22.00)}\pgflineto{\pgfxy(114.00,22.00)}\pgfstroke
\pgfmoveto{\pgfxy(114.00,22.00)}\pgflineto{\pgfxy(111.20,22.70)}\pgflineto{\pgfxy(114.00,22.00)}\pgflineto{\pgfxy(111.20,21.30)}\pgflineto{\pgfxy(114.00,22.00)}\pgfclosepath\pgffill
\pgfmoveto{\pgfxy(114.00,22.00)}\pgflineto{\pgfxy(111.20,22.70)}\pgflineto{\pgfxy(114.00,22.00)}\pgflineto{\pgfxy(111.20,21.30)}\pgflineto{\pgfxy(114.00,22.00)}\pgfclosepath\pgfstroke
\pgfcircle[fill]{\pgfxy(60.00,22.00)}{0.50mm}
\pgfcircle[stroke]{\pgfxy(60.00,22.00)}{0.50mm}
\pgfcircle[fill]{\pgfxy(62.00,28.00)}{0.50mm}
\pgfcircle[stroke]{\pgfxy(62.00,28.00)}{0.50mm}
\pgfcircle[fill]{\pgfxy(64.00,26.00)}{0.50mm}
\pgfcircle[stroke]{\pgfxy(64.00,26.00)}{0.50mm}
\pgfcircle[fill]{\pgfxy(66.00,24.00)}{0.50mm}
\pgfcircle[stroke]{\pgfxy(66.00,24.00)}{0.50mm}
\pgfcircle[fill]{\pgfxy(68.00,22.00)}{0.50mm}
\pgfcircle[stroke]{\pgfxy(68.00,22.00)}{0.50mm}
\pgfcircle[fill]{\pgfxy(70.00,28.00)}{0.50mm}
\pgfcircle[stroke]{\pgfxy(70.00,28.00)}{0.50mm}
\pgfcircle[fill]{\pgfxy(72.00,34.00)}{0.50mm}
\pgfcircle[stroke]{\pgfxy(72.00,34.00)}{0.50mm}
\pgfcircle[fill]{\pgfxy(74.00,32.00)}{0.50mm}
\pgfcircle[stroke]{\pgfxy(74.00,32.00)}{0.50mm}
\pgfcircle[fill]{\pgfxy(76.00,30.00)}{0.50mm}
\pgfcircle[stroke]{\pgfxy(76.00,30.00)}{0.50mm}
\pgfcircle[fill]{\pgfxy(78.00,28.00)}{0.50mm}
\pgfcircle[stroke]{\pgfxy(78.00,28.00)}{0.50mm}
\pgfcircle[fill]{\pgfxy(80.00,34.00)}{0.50mm}
\pgfcircle[stroke]{\pgfxy(80.00,34.00)}{0.50mm}
\pgfcircle[fill]{\pgfxy(82.00,32.00)}{0.50mm}
\pgfcircle[stroke]{\pgfxy(82.00,32.00)}{0.50mm}
\pgfcircle[fill]{\pgfxy(84.00,30.00)}{0.50mm}
\pgfcircle[stroke]{\pgfxy(84.00,30.00)}{0.50mm}
\pgfcircle[fill]{\pgfxy(86.00,28.00)}{0.50mm}
\pgfcircle[stroke]{\pgfxy(86.00,28.00)}{0.50mm}
\pgfcircle[fill]{\pgfxy(88.00,26.00)}{0.50mm}
\pgfcircle[stroke]{\pgfxy(88.00,26.00)}{0.50mm}
\pgfcircle[fill]{\pgfxy(90.00,24.00)}{0.50mm}
\pgfcircle[stroke]{\pgfxy(90.00,24.00)}{0.50mm}
\pgfcircle[fill]{\pgfxy(92.00,22.00)}{0.50mm}
\pgfcircle[stroke]{\pgfxy(92.00,22.00)}{0.50mm}
\pgfcircle[fill]{\pgfxy(94.00,28.00)}{0.50mm}
\pgfcircle[stroke]{\pgfxy(94.00,28.00)}{0.50mm}
\pgfcircle[fill]{\pgfxy(96.00,26.00)}{0.50mm}
\pgfcircle[stroke]{\pgfxy(96.00,26.00)}{0.50mm}
\pgfcircle[fill]{\pgfxy(98.00,32.00)}{0.50mm}
\pgfcircle[stroke]{\pgfxy(98.00,32.00)}{0.50mm}
\pgfcircle[fill]{\pgfxy(100.00,30.00)}{0.50mm}
\pgfcircle[stroke]{\pgfxy(100.00,30.00)}{0.50mm}
\pgfcircle[fill]{\pgfxy(102.00,28.00)}{0.50mm}
\pgfcircle[stroke]{\pgfxy(102.00,28.00)}{0.50mm}
\pgfcircle[fill]{\pgfxy(104.00,26.00)}{0.50mm}
\pgfcircle[stroke]{\pgfxy(104.00,26.00)}{0.50mm}
\pgfcircle[fill]{\pgfxy(106.00,24.00)}{0.50mm}
\pgfcircle[stroke]{\pgfxy(106.00,24.00)}{0.50mm}
\pgfcircle[fill]{\pgfxy(108.00,22.00)}{0.50mm}
\pgfcircle[stroke]{\pgfxy(108.00,22.00)}{0.50mm}
\pgfmoveto{\pgfxy(60.00,22.00)}\pgflineto{\pgfxy(62.00,28.00)}\pgflineto{\pgfxy(64.00,26.00)}\pgflineto{\pgfxy(66.00,24.00)}\pgflineto{\pgfxy(68.00,22.00)}\pgflineto{\pgfxy(70.00,28.00)}\pgflineto{\pgfxy(72.00,34.00)}\pgflineto{\pgfxy(74.00,32.00)}\pgflineto{\pgfxy(76.00,30.00)}\pgflineto{\pgfxy(78.00,28.00)}\pgflineto{\pgfxy(80.00,34.00)}\pgflineto{\pgfxy(82.00,32.00)}\pgflineto{\pgfxy(84.00,30.00)}\pgflineto{\pgfxy(86.00,28.00)}\pgflineto{\pgfxy(88.00,26.00)}\pgflineto{\pgfxy(90.00,24.00)}\pgflineto{\pgfxy(92.00,22.00)}\pgflineto{\pgfxy(94.00,28.00)}\pgflineto{\pgfxy(96.00,26.00)}\pgflineto{\pgfxy(98.00,32.00)}\pgflineto{\pgfxy(100.00,30.00)}\pgflineto{\pgfxy(102.00,28.00)}\pgflineto{\pgfxy(104.00,26.00)}\pgflineto{\pgfxy(106.00,24.00)}\pgflineto{\pgfxy(108.00,22.00)}\pgfstroke
\pgfmoveto{\pgfxy(60.00,18.00)}\pgflineto{\pgfxy(60.00,5.00)}\pgfstroke
\pgfmoveto{\pgfxy(60.00,5.00)}\pgflineto{\pgfxy(60.70,7.80)}\pgflineto{\pgfxy(60.00,5.00)}\pgflineto{\pgfxy(59.30,7.80)}\pgflineto{\pgfxy(60.00,5.00)}\pgfclosepath\pgffill
\pgfmoveto{\pgfxy(60.00,5.00)}\pgflineto{\pgfxy(60.70,7.80)}\pgflineto{\pgfxy(60.00,5.00)}\pgflineto{\pgfxy(59.30,7.80)}\pgflineto{\pgfxy(60.00,5.00)}\pgfclosepath\pgfstroke
\pgfmoveto{\pgfxy(58.00,16.00)}\pgflineto{\pgfxy(114.00,16.00)}\pgfstroke
\pgfmoveto{\pgfxy(114.00,16.00)}\pgflineto{\pgfxy(111.20,16.70)}\pgflineto{\pgfxy(114.00,16.00)}\pgflineto{\pgfxy(111.20,15.30)}\pgflineto{\pgfxy(114.00,16.00)}\pgfclosepath\pgffill
\pgfmoveto{\pgfxy(114.00,16.00)}\pgflineto{\pgfxy(111.20,16.70)}\pgflineto{\pgfxy(114.00,16.00)}\pgflineto{\pgfxy(111.20,15.30)}\pgflineto{\pgfxy(114.00,16.00)}\pgfclosepath\pgfstroke
\pgfcircle[fill]{\pgfxy(60.00,16.00)}{0.50mm}
\pgfcircle[stroke]{\pgfxy(60.00,16.00)}{0.50mm}
\pgfcircle[fill]{\pgfxy(62.00,14.00)}{0.50mm}
\pgfcircle[stroke]{\pgfxy(62.00,14.00)}{0.50mm}
\pgfcircle[fill]{\pgfxy(64.00,12.00)}{0.50mm}
\pgfcircle[stroke]{\pgfxy(64.00,12.00)}{0.50mm}
\pgfcircle[fill]{\pgfxy(66.00,10.00)}{0.50mm}
\pgfcircle[stroke]{\pgfxy(66.00,10.00)}{0.50mm}
\pgfcircle[fill]{\pgfxy(68.00,16.00)}{0.50mm}
\pgfcircle[stroke]{\pgfxy(68.00,16.00)}{0.50mm}
\pgfcircle[fill]{\pgfxy(70.00,14.00)}{0.50mm}
\pgfcircle[stroke]{\pgfxy(70.00,14.00)}{0.50mm}
\pgfcircle[fill]{\pgfxy(72.00,12.00)}{0.50mm}
\pgfcircle[stroke]{\pgfxy(72.00,12.00)}{0.50mm}
\pgfcircle[fill]{\pgfxy(74.00,10.00)}{0.50mm}
\pgfcircle[stroke]{\pgfxy(74.00,10.00)}{0.50mm}
\pgfcircle[fill]{\pgfxy(76.00,8.00)}{0.50mm}
\pgfcircle[stroke]{\pgfxy(76.00,8.00)}{0.50mm}
\pgfcircle[fill]{\pgfxy(78.00,14.00)}{0.50mm}
\pgfcircle[stroke]{\pgfxy(78.00,14.00)}{0.50mm}
\pgfcircle[fill]{\pgfxy(80.00,12.00)}{0.50mm}
\pgfcircle[stroke]{\pgfxy(80.00,12.00)}{0.50mm}
\pgfcircle[fill]{\pgfxy(82.00,10.00)}{0.50mm}
\pgfcircle[stroke]{\pgfxy(82.00,10.00)}{0.50mm}
\pgfcircle[fill]{\pgfxy(84.00,8.00)}{0.50mm}
\pgfcircle[stroke]{\pgfxy(84.00,8.00)}{0.50mm}
\pgfcircle[fill]{\pgfxy(86.00,14.00)}{0.50mm}
\pgfcircle[stroke]{\pgfxy(86.00,14.00)}{0.50mm}
\pgfcircle[fill]{\pgfxy(88.00,12.00)}{0.50mm}
\pgfcircle[stroke]{\pgfxy(88.00,12.00)}{0.50mm}
\pgfcircle[fill]{\pgfxy(90.00,10.00)}{0.50mm}
\pgfcircle[stroke]{\pgfxy(90.00,10.00)}{0.50mm}
\pgfcircle[fill]{\pgfxy(92.00,16.00)}{0.50mm}
\pgfcircle[stroke]{\pgfxy(92.00,16.00)}{0.50mm}
\pgfcircle[fill]{\pgfxy(94.00,14.00)}{0.50mm}
\pgfcircle[stroke]{\pgfxy(94.00,14.00)}{0.50mm}
\pgfcircle[fill]{\pgfxy(96.00,12.00)}{0.50mm}
\pgfcircle[stroke]{\pgfxy(96.00,12.00)}{0.50mm}
\pgfcircle[fill]{\pgfxy(98.00,10.00)}{0.50mm}
\pgfcircle[stroke]{\pgfxy(98.00,10.00)}{0.50mm}
\pgfcircle[fill]{\pgfxy(100.00,8.00)}{0.50mm}
\pgfcircle[stroke]{\pgfxy(100.00,8.00)}{0.50mm}
\pgfcircle[fill]{\pgfxy(102.00,6.00)}{0.50mm}
\pgfcircle[stroke]{\pgfxy(102.00,6.00)}{0.50mm}
\pgfcircle[fill]{\pgfxy(104.00,12.00)}{0.50mm}
\pgfcircle[stroke]{\pgfxy(104.00,12.00)}{0.50mm}
\pgfcircle[fill]{\pgfxy(106.00,10.00)}{0.50mm}
\pgfcircle[stroke]{\pgfxy(106.00,10.00)}{0.50mm}
\pgfcircle[fill]{\pgfxy(108.00,16.00)}{0.50mm}
\pgfcircle[stroke]{\pgfxy(108.00,16.00)}{0.50mm}
\pgfmoveto{\pgfxy(60.00,16.00)}\pgflineto{\pgfxy(62.00,14.00)}\pgflineto{\pgfxy(64.00,12.00)}\pgflineto{\pgfxy(66.00,10.00)}\pgflineto{\pgfxy(68.00,16.00)}\pgflineto{\pgfxy(70.00,14.00)}\pgflineto{\pgfxy(72.00,12.00)}\pgflineto{\pgfxy(74.00,10.00)}\pgflineto{\pgfxy(76.00,8.00)}\pgflineto{\pgfxy(78.00,14.00)}\pgflineto{\pgfxy(80.00,12.00)}\pgflineto{\pgfxy(82.00,10.00)}\pgflineto{\pgfxy(84.00,8.00)}\pgflineto{\pgfxy(86.00,14.00)}\pgflineto{\pgfxy(88.00,12.00)}\pgflineto{\pgfxy(90.00,10.00)}\pgflineto{\pgfxy(92.00,16.00)}\pgflineto{\pgfxy(94.00,14.00)}\pgflineto{\pgfxy(96.00,12.00)}\pgflineto{\pgfxy(98.00,10.00)}\pgflineto{\pgfxy(100.00,8.00)}\pgflineto{\pgfxy(102.00,6.00)}\pgflineto{\pgfxy(104.00,12.00)}\pgflineto{\pgfxy(106.00,10.00)}\pgflineto{\pgfxy(108.00,16.00)}\pgfstroke
\pgfmoveto{\pgfxy(45.00,14.00)}\pgflineto{\pgfxy(55.00,12.00)}\pgfstroke
\pgfmoveto{\pgfxy(55.00,12.00)}\pgflineto{\pgfxy(52.39,13.24)}\pgflineto{\pgfxy(52.12,11.86)}\pgflineto{\pgfxy(55.00,12.00)}\pgfclosepath\pgffill
\pgfmoveto{\pgfxy(55.00,12.00)}\pgflineto{\pgfxy(52.39,13.24)}\pgflineto{\pgfxy(52.12,11.86)}\pgflineto{\pgfxy(55.00,12.00)}\pgfclosepath\pgfstroke
\pgfputat{\pgfxy(50.00,15.00)}{\pgfbox[bottom,left]{\fontsize{11.38}{13.66}\selectfont \makebox[0pt]{$\overline{\varphi}$}}}
\end{pgfpicture}%
\caption{Two bijections $\varphi$ and $\overline{\varphi}$}
\label{fig:varphi}
\end{figure}

The bijection $\psi$ corresponds a tree to a lattice path weakly above the $x$-axis by recording the steps when the tree is traversed in preorder: whenever we meet a vertex of outdegree $m$, except the last leaf, record $m$ up-steps and $1$ down-step.
An example of the bijection $\psi$ is shown in Figure~\ref{fig:psi}.
\begin{figure}[t]
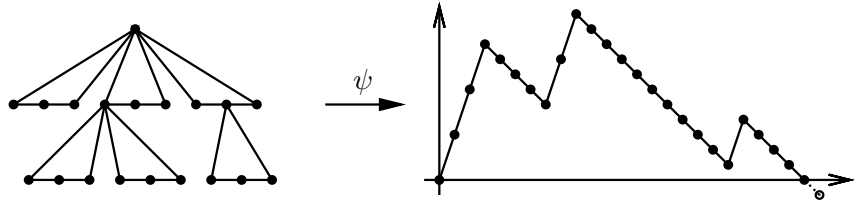

\centering
\begin{pgfpicture}{1.50mm}{5.50mm}{116.00mm}{35.00mm}
\pgfsetxvec{\pgfpoint{1.00mm}{0mm}}
\pgfsetyvec{\pgfpoint{0mm}{1.00mm}}
\color[rgb]{0,0,0}\pgfsetlinewidth{0.30mm}\pgfsetdash{}{0mm}
\pgfmoveto{\pgfxy(45.00,20.00)}\pgflineto{\pgfxy(55.00,20.00)}\pgfstroke
\pgfmoveto{\pgfxy(55.00,20.00)}\pgflineto{\pgfxy(52.20,20.70)}\pgflineto{\pgfxy(52.20,19.30)}\pgflineto{\pgfxy(55.00,20.00)}\pgfclosepath\pgffill
\pgfmoveto{\pgfxy(55.00,20.00)}\pgflineto{\pgfxy(52.20,20.70)}\pgflineto{\pgfxy(52.20,19.30)}\pgflineto{\pgfxy(55.00,20.00)}\pgfclosepath\pgfstroke
\pgfputat{\pgfxy(50.00,22.00)}{\pgfbox[bottom,left]{\fontsize{11.38}{13.66}\selectfont \makebox[0pt]{$\psi$}}}
\pgfcircle[fill]{\pgfxy(20.00,30.00)}{0.50mm}
\pgfcircle[stroke]{\pgfxy(20.00,30.00)}{0.50mm}
\pgfcircle[fill]{\pgfxy(20.00,20.00)}{0.50mm}
\pgfcircle[stroke]{\pgfxy(20.00,20.00)}{0.50mm}
\pgfcircle[fill]{\pgfxy(16.00,20.00)}{0.50mm}
\pgfcircle[stroke]{\pgfxy(16.00,20.00)}{0.50mm}
\pgfcircle[fill]{\pgfxy(24.00,20.00)}{0.50mm}
\pgfcircle[stroke]{\pgfxy(24.00,20.00)}{0.50mm}
\pgfcircle[fill]{\pgfxy(12.00,20.00)}{0.50mm}
\pgfcircle[stroke]{\pgfxy(12.00,20.00)}{0.50mm}
\pgfcircle[fill]{\pgfxy(8.00,20.00)}{0.50mm}
\pgfcircle[stroke]{\pgfxy(8.00,20.00)}{0.50mm}
\pgfcircle[fill]{\pgfxy(4.00,20.00)}{0.50mm}
\pgfcircle[stroke]{\pgfxy(4.00,20.00)}{0.50mm}
\pgfcircle[fill]{\pgfxy(28.00,20.00)}{0.50mm}
\pgfcircle[stroke]{\pgfxy(28.00,20.00)}{0.50mm}
\pgfcircle[fill]{\pgfxy(32.00,20.00)}{0.50mm}
\pgfcircle[stroke]{\pgfxy(32.00,20.00)}{0.50mm}
\pgfcircle[fill]{\pgfxy(36.00,20.00)}{0.50mm}
\pgfcircle[stroke]{\pgfxy(36.00,20.00)}{0.50mm}
\pgfmoveto{\pgfxy(20.00,30.00)}\pgflineto{\pgfxy(4.00,20.00)}\pgflineto{\pgfxy(12.00,20.00)}\pgfclosepath\pgfstroke
\pgfmoveto{\pgfxy(20.00,30.00)}\pgflineto{\pgfxy(16.00,20.00)}\pgflineto{\pgfxy(24.00,20.00)}\pgfclosepath\pgfstroke
\pgfmoveto{\pgfxy(20.00,30.00)}\pgflineto{\pgfxy(28.00,20.00)}\pgflineto{\pgfxy(36.00,20.00)}\pgfclosepath\pgfstroke
\pgfcircle[fill]{\pgfxy(14.00,10.00)}{0.50mm}
\pgfcircle[stroke]{\pgfxy(14.00,10.00)}{0.50mm}
\pgfcircle[fill]{\pgfxy(10.00,10.00)}{0.50mm}
\pgfcircle[stroke]{\pgfxy(10.00,10.00)}{0.50mm}
\pgfcircle[fill]{\pgfxy(6.00,10.00)}{0.50mm}
\pgfcircle[stroke]{\pgfxy(6.00,10.00)}{0.50mm}
\pgfcircle[fill]{\pgfxy(18.00,10.00)}{0.50mm}
\pgfcircle[stroke]{\pgfxy(18.00,10.00)}{0.50mm}
\pgfcircle[fill]{\pgfxy(22.00,10.00)}{0.50mm}
\pgfcircle[stroke]{\pgfxy(22.00,10.00)}{0.50mm}
\pgfcircle[fill]{\pgfxy(26.00,10.00)}{0.50mm}
\pgfcircle[stroke]{\pgfxy(26.00,10.00)}{0.50mm}
\pgfmoveto{\pgfxy(16.00,20.00)}\pgflineto{\pgfxy(18.00,10.00)}\pgflineto{\pgfxy(26.00,10.00)}\pgfclosepath\pgfstroke
\pgfmoveto{\pgfxy(16.00,20.00)}\pgflineto{\pgfxy(6.00,10.00)}\pgflineto{\pgfxy(14.00,10.00)}\pgfclosepath\pgfstroke
\pgfcircle[fill]{\pgfxy(30.00,10.00)}{0.50mm}
\pgfcircle[stroke]{\pgfxy(30.00,10.00)}{0.50mm}
\pgfcircle[fill]{\pgfxy(34.00,10.00)}{0.50mm}
\pgfcircle[stroke]{\pgfxy(34.00,10.00)}{0.50mm}
\pgfcircle[fill]{\pgfxy(38.00,10.00)}{0.50mm}
\pgfcircle[stroke]{\pgfxy(38.00,10.00)}{0.50mm}
\pgfmoveto{\pgfxy(32.00,20.00)}\pgflineto{\pgfxy(30.00,10.00)}\pgflineto{\pgfxy(38.00,10.00)}\pgfclosepath\pgfstroke
\pgfmoveto{\pgfxy(60.00,8.00)}\pgflineto{\pgfxy(60.00,33.00)}\pgfstroke
\pgfmoveto{\pgfxy(60.00,33.00)}\pgflineto{\pgfxy(59.30,30.20)}\pgflineto{\pgfxy(60.00,33.00)}\pgflineto{\pgfxy(60.70,30.20)}\pgflineto{\pgfxy(60.00,33.00)}\pgfclosepath\pgffill
\pgfmoveto{\pgfxy(60.00,33.00)}\pgflineto{\pgfxy(59.30,30.20)}\pgflineto{\pgfxy(60.00,33.00)}\pgflineto{\pgfxy(60.70,30.20)}\pgflineto{\pgfxy(60.00,33.00)}\pgfclosepath\pgfstroke
\pgfmoveto{\pgfxy(58.00,10.00)}\pgflineto{\pgfxy(114.00,10.00)}\pgfstroke
\pgfmoveto{\pgfxy(114.00,10.00)}\pgflineto{\pgfxy(111.20,10.70)}\pgflineto{\pgfxy(114.00,10.00)}\pgflineto{\pgfxy(111.20,9.30)}\pgflineto{\pgfxy(114.00,10.00)}\pgfclosepath\pgffill
\pgfmoveto{\pgfxy(114.00,10.00)}\pgflineto{\pgfxy(111.20,10.70)}\pgflineto{\pgfxy(114.00,10.00)}\pgflineto{\pgfxy(111.20,9.30)}\pgflineto{\pgfxy(114.00,10.00)}\pgfclosepath\pgfstroke
\pgfcircle[fill]{\pgfxy(60.00,10.00)}{0.50mm}
\pgfcircle[stroke]{\pgfxy(60.00,10.00)}{0.50mm}
\pgfcircle[fill]{\pgfxy(62.00,16.00)}{0.50mm}
\pgfcircle[stroke]{\pgfxy(62.00,16.00)}{0.50mm}
\pgfcircle[fill]{\pgfxy(64.00,22.00)}{0.50mm}
\pgfcircle[stroke]{\pgfxy(64.00,22.00)}{0.50mm}
\pgfcircle[fill]{\pgfxy(66.00,28.00)}{0.50mm}
\pgfcircle[stroke]{\pgfxy(66.00,28.00)}{0.50mm}
\pgfcircle[fill]{\pgfxy(68.00,26.00)}{0.50mm}
\pgfcircle[stroke]{\pgfxy(68.00,26.00)}{0.50mm}
\pgfcircle[fill]{\pgfxy(70.00,24.00)}{0.50mm}
\pgfcircle[stroke]{\pgfxy(70.00,24.00)}{0.50mm}
\pgfcircle[fill]{\pgfxy(72.00,22.00)}{0.50mm}
\pgfcircle[stroke]{\pgfxy(72.00,22.00)}{0.50mm}
\pgfcircle[fill]{\pgfxy(74.00,20.00)}{0.50mm}
\pgfcircle[stroke]{\pgfxy(74.00,20.00)}{0.50mm}
\pgfcircle[fill]{\pgfxy(76.00,26.00)}{0.50mm}
\pgfcircle[stroke]{\pgfxy(76.00,26.00)}{0.50mm}
\pgfcircle[fill]{\pgfxy(78.00,32.00)}{0.50mm}
\pgfcircle[stroke]{\pgfxy(78.00,32.00)}{0.50mm}
\pgfcircle[fill]{\pgfxy(80.00,30.00)}{0.50mm}
\pgfcircle[stroke]{\pgfxy(80.00,30.00)}{0.50mm}
\pgfcircle[fill]{\pgfxy(82.00,28.00)}{0.50mm}
\pgfcircle[stroke]{\pgfxy(82.00,28.00)}{0.50mm}
\pgfcircle[fill]{\pgfxy(84.00,26.00)}{0.50mm}
\pgfcircle[stroke]{\pgfxy(84.00,26.00)}{0.50mm}
\pgfcircle[fill]{\pgfxy(86.00,24.00)}{0.50mm}
\pgfcircle[stroke]{\pgfxy(86.00,24.00)}{0.50mm}
\pgfcircle[fill]{\pgfxy(88.00,22.00)}{0.50mm}
\pgfcircle[stroke]{\pgfxy(88.00,22.00)}{0.50mm}
\pgfcircle[fill]{\pgfxy(90.00,20.00)}{0.50mm}
\pgfcircle[stroke]{\pgfxy(90.00,20.00)}{0.50mm}
\pgfcircle[fill]{\pgfxy(92.00,18.00)}{0.50mm}
\pgfcircle[stroke]{\pgfxy(92.00,18.00)}{0.50mm}
\pgfcircle[fill]{\pgfxy(94.00,16.00)}{0.50mm}
\pgfcircle[stroke]{\pgfxy(94.00,16.00)}{0.50mm}
\pgfcircle[fill]{\pgfxy(96.00,14.00)}{0.50mm}
\pgfcircle[stroke]{\pgfxy(96.00,14.00)}{0.50mm}
\pgfcircle[fill]{\pgfxy(98.00,12.00)}{0.50mm}
\pgfcircle[stroke]{\pgfxy(98.00,12.00)}{0.50mm}
\pgfcircle[fill]{\pgfxy(100.00,18.00)}{0.50mm}
\pgfcircle[stroke]{\pgfxy(100.00,18.00)}{0.50mm}
\pgfcircle[fill]{\pgfxy(102.00,16.00)}{0.50mm}
\pgfcircle[stroke]{\pgfxy(102.00,16.00)}{0.50mm}
\pgfcircle[fill]{\pgfxy(104.00,14.00)}{0.50mm}
\pgfcircle[stroke]{\pgfxy(104.00,14.00)}{0.50mm}
\pgfcircle[fill]{\pgfxy(106.00,12.00)}{0.50mm}
\pgfcircle[stroke]{\pgfxy(106.00,12.00)}{0.50mm}
\pgfcircle[fill]{\pgfxy(108.00,10.00)}{0.50mm}
\pgfcircle[stroke]{\pgfxy(108.00,10.00)}{0.50mm}
\color[rgb]{1,1,1}\pgfcircle[fill]{\pgfxy(110.00,8.00)}{0.50mm}
\color[rgb]{0,0,0}\pgfcircle[stroke]{\pgfxy(110.00,8.00)}{0.50mm}
\pgfmoveto{\pgfxy(60.00,10.00)}\pgflineto{\pgfxy(62.00,16.00)}\pgflineto{\pgfxy(64.00,22.00)}\pgflineto{\pgfxy(66.00,28.00)}\pgflineto{\pgfxy(68.00,26.00)}\pgflineto{\pgfxy(70.00,24.00)}\pgflineto{\pgfxy(72.00,22.00)}\pgflineto{\pgfxy(74.00,20.00)}\pgflineto{\pgfxy(76.00,26.00)}\pgflineto{\pgfxy(78.00,32.00)}\pgflineto{\pgfxy(80.00,30.00)}\pgflineto{\pgfxy(82.00,28.00)}\pgflineto{\pgfxy(84.00,26.00)}\pgflineto{\pgfxy(88.00,22.00)}\pgflineto{\pgfxy(88.00,22.00)}\pgflineto{\pgfxy(90.00,20.00)}\pgflineto{\pgfxy(92.00,18.00)}\pgflineto{\pgfxy(94.00,16.00)}\pgflineto{\pgfxy(96.00,14.00)}\pgflineto{\pgfxy(98.00,12.00)}\pgflineto{\pgfxy(100.00,18.00)}\pgflineto{\pgfxy(102.00,16.00)}\pgflineto{\pgfxy(104.00,14.00)}\pgflineto{\pgfxy(106.00,12.00)}\pgflineto{\pgfxy(108.00,10.00)}\pgfstroke
\pgfmoveto{\pgfxy(110.00,8.00)}\pgflineto{\pgfxy(110.00,8.00)}\pgfstroke
\pgfsetdash{{0.30mm}{0.50mm}}{0mm}\pgfmoveto{\pgfxy(110.00,8.00)}\pgflineto{\pgfxy(108.00,10.00)}\pgfstroke
\end{pgfpicture}%
\caption{The bijection $\psi$}
\label{fig:psi}
\end{figure}
\nt{detail proof?}

\subsection*{Step 1}
Given $(T,v) \in \mathcal{V}$,
let $D_v$ be the subtree consisting of $v$ and all its descendants in $T$, say the \emph{descendant subtree of $v$}.
Letting $\ell'(\ge\ell)$ be the level of $v$, consider the path from $v$ to the root $r$ of $T$
$$v (=v_0) \to v_1 \to \dots \to v_{\ell} \to \dots \to v_{\ell'-1} \to r(=v_{\ell'}).$$
Record the number $p_i$ of elder siblings of $v_{i}$ in the tuplet of $v_i$ for all $0 \le i \le \ell-1$.
For all $0\le i \le \ell-1$, if $w_{i}$ is the youngest sibling of $v_i$ in the tuplet of $v_i$, we exchange two subtrees $D_{v_i}$ and $D_{w_i}$ and we obtain the tree $T'$.

\subsection*{Step 2}
For all $1 \le i \le \ell-1$ and $i=\ell'$, let $R_i$ be the subtree consisting $v_i$ and all its descendants on the right of the tuplet of $v_{i-1}$ in $T'$.
We obtain the tree $L$ by cutting the $\ell+1$ subtrees $D_{v}, R_1, \dots, R_{\ell-1}, R_{\ell'}$ from the tree $T'$, see Figure~\ref{fig:decomposition}.

%

%
%

\subsection*{A construction of the bijection $\Phi$}
We will construct the bijection $\Phi$ between $\mathcal{V}$ to $\mathcal{P} \times \mathcal{L}$.
Given $(T,v) \in \mathcal{V}$, let $k' (\ge k)$ be the outdegree of $v$ in $T$ and let $\ell' (\ge \ell)$ be the level of $v$ in $T$.
We separate two cases:

\subsubsection*{Case I}

If $v$ is not the root of $T$, i.e., $\ell' >0$.
We obtain the sequence $p = (p_0, \dots, p_{\ell-1}) \in \mathcal{P}$ in Step 1 and $(\ell+2)$ trees $D_v, R_1, R_2, \dots, R_{\ell-1}, R_{\ell'}, L$ after Step 2 as Figure~\ref{fig:decomposition}.
\begin{figure}[t]
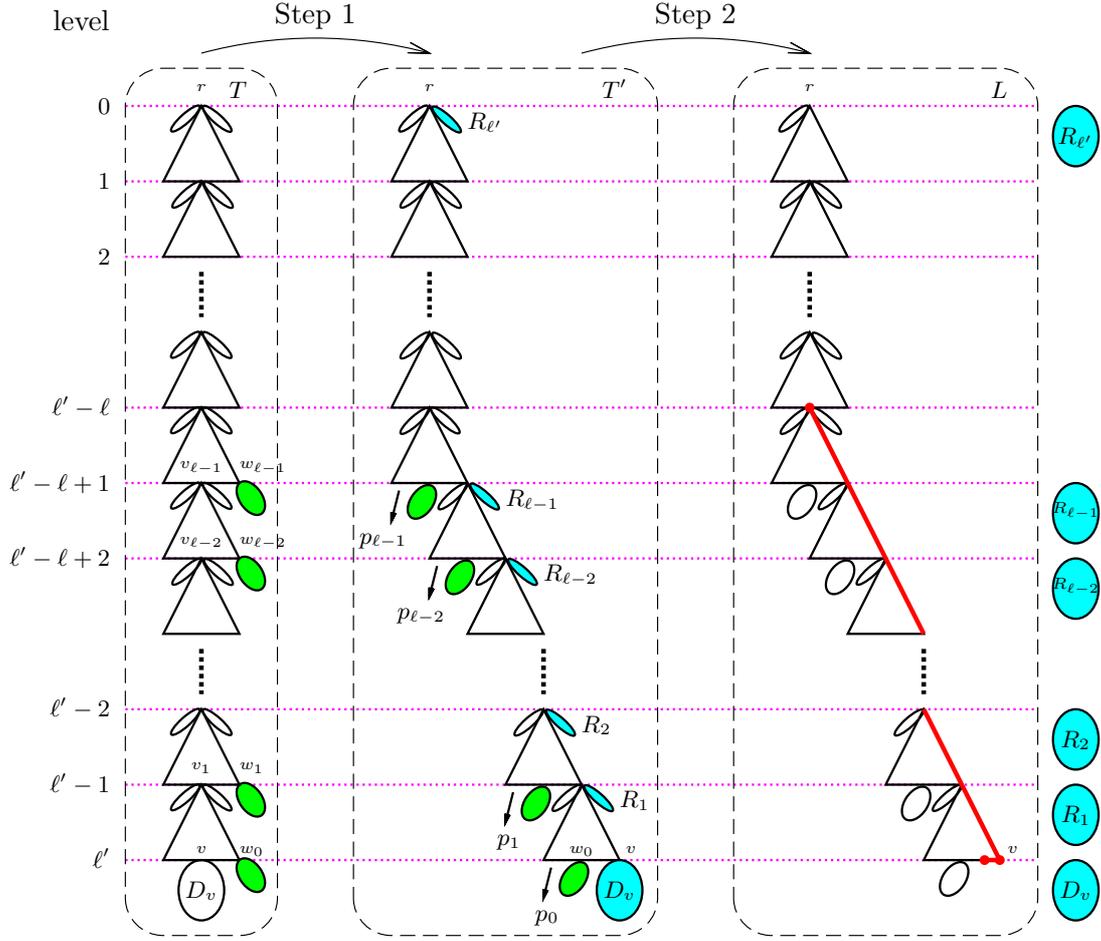

\centering
%
\caption{Tree decomposition}
\label{fig:decomposition}
\end{figure}

Let $\rho$ be the mapping on the set of lattice paths defined by
$$\rho(s_1 s_2 \cdots s_{n}) = s_2 \cdots s_n s_1,$$
where each $s_i$ is a step. Note that $\rho^m$ means to apply $\rho$ recursively $m$ times.


Clearly, the outdegree of the root of $D_v$ is $k'$.
In the tree $L$, there are no younger siblings of $v$ in the tuplet of $v$ and the outdegree of vertex $v$ is $0$.
Thus the lattice path $\rho^{a+\ell}(\overline{\varphi}(L))$ ends with one down-step and $\ell$ consecutive up-steps, 
where $a$ is the number of vertices of $L$ which precede $v$ in preorder.

We define a lattice path $P$ from $(0,0)$ to $((d+1)n+(\ell+1), -(\ell+1))$ by
$$P = \psi(D_v) \searrow \varphi(R_1) \searrow \varphi(R_2) \searrow \dots \searrow \varphi(R_{\ell-1}) \searrow \varphi(R_{\ell'}) \searrow \rho^{a+\ell}(\overline{\varphi}(L)),$$
where $\searrow$ means a down-step.

\subsubsection*{Case II}
If $v$ is the root of $T$, i.e., $\ell' = 0$.
We define a sequence $p = () \in \mathcal{P}$ and a lattice path $$P = \psi(T) \searrow.$$

In all cases, the lattice path $P$ always starts with at least $k$ (precisely $k'$) consecutive up-steps and ends with one down-step and $\ell$ consecutive up-steps as red segments in Figure~\ref{fig:mainlemma}.

\begin{figure}[t]
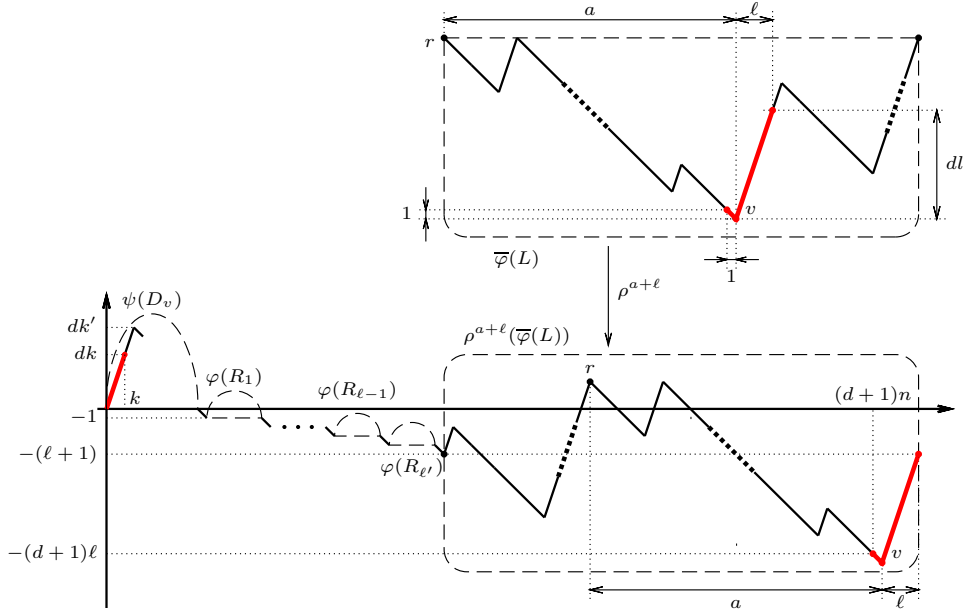

\centering
%
\caption{Outline of a lattice path $P$ induced from tree decomposition}
\label{fig:mainlemma}
\end{figure}

By removing the first $k$ steps and the last $(\ell+1)$ steps from $P$, we obtain the lattice path $\hat{P}$ of length $((d+1)n-k)$ from $(k, dk)$ to $((d+1)n, -(d+1)\ell)$, consisting of $(n-k-\ell)$ up-steps along the vector $(1,d)$ and $(dn+\ell)$ down-steps along the vector $(1,-1)$, so $\hat{P}$ belongs to $\mathcal{L}$.


Hence the map $\Phi: \mathcal{V} \to \mathcal{P} \times \mathcal{L}$ is defined by
$$\Phi(T,v) = (p, \hat{P}).$$

\subsection*{A description of the bijection $\Phi^{-1}$}
In the Case I of the construction of the bijection $\Phi$, given a lattice path $P$ from $(0,0)$ to $((d+1)n+(\ell+1), -(\ell+1))$, we decompose $P$ into $(\ell+2)$ paths $P_D, P_1, \dots, P_{\ell-1}, P_{\ell'}, P_L$ by removing the leftmost down-steps from height $-i$ to height $-(i+1)$ for $0\le i \le \ell$. Some of those paths may be empty.

Clearly all the paths $P_D, P_1, \dots, P_{\ell-1}, P_{\ell'}$ are $d$-Fuss-Catalan path.
By moving all the steps after the leftmost highest vertex in the lattice path $P_L$ to the beginning, we obtain a reverse $d$-Fuss-Catalan path $\overline{P}_L$ from $P_L$.
Since $\varphi$, $\overline{\varphi}$, and $\psi$ are bijections,
we can restore trees $D_v, R_1, \dots, R_{\ell-1}, R_{\ell'}, L$ from $P_D, P_1, \dots, P_{\ell-1}, P_{\ell'}, \overline{P}_L$.

Therefore, $\Phi$ is a bijection between $ \mathcal{V}$ and $ \mathcal{P} \times \mathcal{L}$
since all the remaining processes are also reversible.

\section{Proof of Theorem~\ref{thm:main2}}
\label{sec:calculation}

For any three nonnegative integers $i$, $j$, $k$ and one positive integer $\ell$, denote by $\Vnd(i,j,k;\ell)$ the set of pairs $(T,v)$
whose tree $T$ in $\T_n^{(d)}$ and vertex $v$ in $T$ such that
\begin{itemize}
\item $v$ has at least $i$ elder siblings in $T$,
\item $v$ has at least $j$ younger siblings in $T$,
\item $v$ has at least $k$ children in $T$,
\item $v$ is at level $\ge \ell$ in $T$.
\end{itemize}
We show the following lemma, which is a special case of Theorem~\ref{thm:main2}, that is, $i$ and $j$ are multiples of $d$.
\begin{lem}
\label{eq:lemma}
Given $n \ge 1$, for any three nonnegative integers $i$, $j$, $k$, all of which are multiples of $d$, and one positive integer $\ell$,
the cardinality of $\Vnd(i,j,k;\ell)$ is
\begin{align*}
d^\ell \binom{(d+1)n-\alpha}{dn+\ell},
\end{align*}
where $\alpha$ is the nonnegative integer satisfying $i+j+k = \alpha d$.
\end{lem}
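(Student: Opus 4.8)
The plan is to deduce Lemma~\ref{eq:lemma} from Theorem~\ref{thm:main1} by iterating a local surgery on $(T,v)$ that trades $d$ elder (or younger) siblings of $v$ for $d$ children of $v$, while keeping the number of tuplets of $T$, the level of $v$, and all the remaining constraints fixed. Since $i$, $j$, and $k$ are multiples of $d$, performing $i/d$ elder-transfers and $j/d$ younger-transfers will reduce the count to that of $\Vnd(0,0,i+j+k;\ell)$, which Theorem~\ref{thm:main1} evaluates directly.

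For the elder-transfer, assume $i\ge d$ and consider $(T,v)\in\Vnd(i,j,k;\ell)$ with $u$ the parent of $v$. Since $v$'s own tuplet contributes at most $d-1$ elder siblings, the hypothesis $i\ge d$ forces $u$ to have at least two tuplets and the tuplet of $v$ to be strictly to the right of the leftmost tuplet $t_0$ of $u$; in particular the $d$ children $b_1<\dots<b_d$ of $t_0$ are all elder siblings of $v$. I would then delete $t_0$ and insert, as the new leftmost tuplet at $v$, a tuplet whose $d$ children carry the descendant subtrees $D_{b_1},\dots,D_{b_d}$ in that order. This removes one tuplet and adds one, so the result still lies in $\Tnd$; it decreases the elder-sibling count of $v$ by exactly $d$, increases its number of children by exactly $d$, and leaves its younger-sibling count and its level unchanged, so it lands in $\Vnd(i-d,j,k+d;\ell)$. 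The inverse takes the leftmost tuplet at $v$ (available because $v$ now has at least $k+d\ge d$ children), deletes it, and reattaches its $d$ child-subtrees, in order, as a new leftmost tuplet at $u$; checking that the two maps compose to the identity in both orders is routine. The mirror construction using the rightmost tuplet at $u$ gives, for $j\ge d$, a bijection $\Vnd(i,j,k;\ell)\to\Vnd(i,j-d,k+d;\ell)$.

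Applying these bijections $i/d$ and $j/d$ times gives $\#\Vnd(i,j,k;\ell)=\#\Vnd(0,0,i+j+k;\ell)$. Since a vertex has exactly $d\cdot\outdeg(v)$ children, requiring ``at least $i+j+k=\alpha d$ children'' is the same as requiring $\outdeg(v)\ge\alpha$, so $\Vnd(0,0,i+j+k;\ell)$ is precisely the set of pairs $(T,v)$ with $v$ of outdegree $\ge\alpha$ at level $\ge\ell$. Theorem~\ref{thm:main1} (with its parameter $k$ taken to be $\alpha$) then gives $\#\Vnd(0,0,i+j+k;\ell)=d^\ell\binom{(d+1)n-\alpha}{dn+\ell}$, which is the claimed formula.

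The only delicate point is the verification that the transfer surgery is well-defined and bijective; the hypotheses $i\ge d$ and $j\ge d$ are used precisely to ensure that the tuplet of $v$ is never the extremal tuplet of $u$ being removed, so that $v$'s own tuplet cannot interfere. Once that is granted, the rest is bookkeeping of the four constraints together with the tuplet count, and the passage to Theorem~\ref{thm:main1} is immediate.
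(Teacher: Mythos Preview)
Your proof is correct and follows essentially the same approach as the paper's: a cut-and-paste bijection that moves tuplets from the parent of $v$ down to $v$, reducing $\Vnd(i,j,k;\ell)$ to $\Vnd(0,0,i+j+k;\ell)$, and then invoking Theorem~\ref{thm:main1} with its parameter set to $\alpha=(i+j+k)/d$. The only cosmetic difference is that the paper moves all $i/d$ leftmost and $j/d$ rightmost tuplets in a single cut-and-paste $\gamma_{i,j}$, whereas you transfer one tuplet at a time and iterate; both yield the same bijection up to the order in which the transferred tuplets are stacked at $v$, and your version makes the well-definedness check (that $v$'s own tuplet is never the one removed) slightly more explicit.
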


\begin{proof}
That a vertex $v$ has at least $i$ elder (or younger resp.) siblings means that there exists at least $i/d$ (or $j/d$ resp.) $d$-tuplets directly connected from the parent of $v$ on its left (or right resp.).


A pair $(T,v)$ in $\Vnd(i,j,k,\ell)$
corresponds to a pair $(T',v)$ in $\Vnd(0,0,i+j+k,\ell)$
under a \emph{cut-and-paste} bijection $\gamma_{i,j}:(T, v) \mapsto (T',v)$
which cuts the leftmost $i/d$ tuplets connected from the parent $p$ of $v$
and pastes them at $v$ on the left and does again the rightmost $j/d$ tuplets connected from the parent $p$ of $v$ on the right, as Figure~\ref{fig:cut-and-paste}.
\begin{figure}
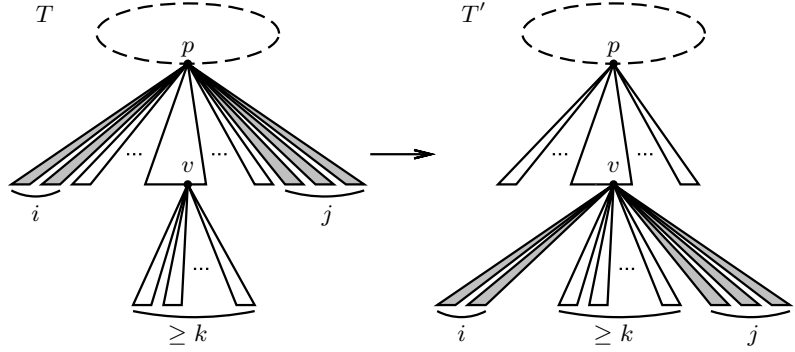

\centering
\begin{pgfpicture}{-9.20mm}{-7.49mm}{97.20mm}{47.60mm}
\pgfsetxvec{\pgfpoint{0.80mm}{0mm}}
\pgfsetyvec{\pgfpoint{0mm}{0.80mm}}
\color[rgb]{0,0,0}\pgfsetlinewidth{0.30mm}\pgfsetdash{}{0mm}
\color[rgb]{1,1,1}\pgfellipse[fill]{\pgfxy(31.00,57.00)}{\pgfxy(0.00,0.00)}{\pgfxy(0.00,0.00)}
\color[rgb]{0,0,0}\pgfellipse[stroke]{\pgfxy(31.00,57.00)}{\pgfxy(0.00,0.00)}{\pgfxy(0.00,0.00)}
\pgfcircle[fill]{\pgfxy(20.00,40.00)}{0.40mm}
\pgfcircle[stroke]{\pgfxy(20.00,40.00)}{0.40mm}
\pgfmoveto{\pgfxy(20.00,40.00)}\pgflineto{\pgfxy(13.00,20.00)}\pgflineto{\pgfxy(23.00,20.00)}\pgfclosepath\pgfstroke
\color[rgb]{0.75294,0.75294,0.75294}\pgfmoveto{\pgfxy(20.00,40.00)}\pgflineto{\pgfxy(-9.00,20.00)}\pgflineto{\pgfxy(-6.00,20.00)}\pgfclosepath\pgffill
\color[rgb]{0,0,0}\pgfmoveto{\pgfxy(20.00,40.00)}\pgflineto{\pgfxy(-9.00,20.00)}\pgflineto{\pgfxy(-6.00,20.00)}\pgfclosepath\pgfstroke
\color[rgb]{0.75294,0.75294,0.75294}\pgfmoveto{\pgfxy(20.00,40.00)}\pgflineto{\pgfxy(-4.00,20.00)}\pgflineto{\pgfxy(-1.00,20.00)}\pgfclosepath\pgffill
\color[rgb]{0,0,0}\pgfmoveto{\pgfxy(20.00,40.00)}\pgflineto{\pgfxy(-4.00,20.00)}\pgflineto{\pgfxy(-1.00,20.00)}\pgfclosepath\pgfstroke
\pgfmoveto{\pgfxy(20.00,40.00)}\pgflineto{\pgfxy(1.00,20.00)}\pgflineto{\pgfxy(4.00,20.00)}\pgfclosepath\pgfstroke
\pgfmoveto{\pgfxy(20.00,40.00)}\pgflineto{\pgfxy(31.00,20.00)}\pgflineto{\pgfxy(34.00,20.00)}\pgfclosepath\pgfstroke
\color[rgb]{0.75294,0.75294,0.75294}\pgfmoveto{\pgfxy(20.00,40.00)}\pgflineto{\pgfxy(36.00,20.00)}\pgflineto{\pgfxy(39.00,20.00)}\pgfclosepath\pgffill
\color[rgb]{0,0,0}\pgfmoveto{\pgfxy(20.00,40.00)}\pgflineto{\pgfxy(36.00,20.00)}\pgflineto{\pgfxy(39.00,20.00)}\pgfclosepath\pgfstroke
\color[rgb]{0.75294,0.75294,0.75294}\pgfmoveto{\pgfxy(20.00,40.00)}\pgflineto{\pgfxy(41.00,20.00)}\pgflineto{\pgfxy(44.00,20.00)}\pgfclosepath\pgffill
\color[rgb]{0,0,0}\pgfmoveto{\pgfxy(20.00,40.00)}\pgflineto{\pgfxy(41.00,20.00)}\pgflineto{\pgfxy(44.00,20.00)}\pgfclosepath\pgfstroke
\color[rgb]{0.75294,0.75294,0.75294}\pgfmoveto{\pgfxy(20.00,40.00)}\pgflineto{\pgfxy(46.00,20.00)}\pgflineto{\pgfxy(49.00,20.00)}\pgfclosepath\pgffill
\color[rgb]{0,0,0}\pgfmoveto{\pgfxy(20.00,40.00)}\pgflineto{\pgfxy(46.00,20.00)}\pgflineto{\pgfxy(49.00,20.00)}\pgfclosepath\pgfstroke
\pgfsetdash{{0.30mm}{0.50mm}}{0mm}\pgfmoveto{\pgfxy(10.00,25.00)}\pgflineto{\pgfxy(13.00,25.00)}\pgfstroke
\pgfmoveto{\pgfxy(24.00,25.00)}\pgflineto{\pgfxy(27.00,25.00)}\pgfstroke
\pgfcircle[fill]{\pgfxy(90.00,40.00)}{0.40mm}
\pgfsetdash{}{0mm}\pgfcircle[stroke]{\pgfxy(90.00,40.00)}{0.40mm}
\pgfmoveto{\pgfxy(90.00,40.00)}\pgflineto{\pgfxy(83.00,20.00)}\pgflineto{\pgfxy(93.00,20.00)}\pgfclosepath\pgfstroke
\color[rgb]{0.75294,0.75294,0.75294}\pgfmoveto{\pgfxy(90.00,20.00)}\pgflineto{\pgfxy(66.00,0.00)}\pgflineto{\pgfxy(69.00,0.00)}\pgfclosepath\pgffill
\color[rgb]{0,0,0}\pgfmoveto{\pgfxy(90.00,20.00)}\pgflineto{\pgfxy(66.00,0.00)}\pgflineto{\pgfxy(69.00,0.00)}\pgfclosepath\pgfstroke
\color[rgb]{0.75294,0.75294,0.75294}\pgfmoveto{\pgfxy(90.00,20.00)}\pgflineto{\pgfxy(61.00,0.00)}\pgflineto{\pgfxy(64.00,0.00)}\pgfclosepath\pgffill
\color[rgb]{0,0,0}\pgfmoveto{\pgfxy(90.00,20.00)}\pgflineto{\pgfxy(61.00,0.00)}\pgflineto{\pgfxy(64.00,0.00)}\pgfclosepath\pgfstroke
\pgfmoveto{\pgfxy(90.00,40.00)}\pgflineto{\pgfxy(71.00,20.00)}\pgflineto{\pgfxy(74.00,20.00)}\pgfclosepath\pgfstroke
\pgfmoveto{\pgfxy(90.00,40.00)}\pgflineto{\pgfxy(101.00,20.00)}\pgflineto{\pgfxy(104.00,20.00)}\pgfclosepath\pgfstroke
\color[rgb]{0.75294,0.75294,0.75294}\pgfmoveto{\pgfxy(90.00,20.00)}\pgflineto{\pgfxy(106.00,0.00)}\pgflineto{\pgfxy(109.00,0.00)}\pgfclosepath\pgffill
\color[rgb]{0,0,0}\pgfmoveto{\pgfxy(90.00,20.00)}\pgflineto{\pgfxy(106.00,0.00)}\pgflineto{\pgfxy(109.00,0.00)}\pgfclosepath\pgfstroke
\color[rgb]{0.75294,0.75294,0.75294}\pgfmoveto{\pgfxy(90.00,20.00)}\pgflineto{\pgfxy(111.00,0.00)}\pgflineto{\pgfxy(114.00,0.00)}\pgfclosepath\pgffill
\color[rgb]{0,0,0}\pgfmoveto{\pgfxy(90.00,20.00)}\pgflineto{\pgfxy(111.00,0.00)}\pgflineto{\pgfxy(114.00,0.00)}\pgfclosepath\pgfstroke
\color[rgb]{0.75294,0.75294,0.75294}\pgfmoveto{\pgfxy(90.00,20.00)}\pgflineto{\pgfxy(116.00,0.00)}\pgflineto{\pgfxy(119.00,0.00)}\pgfclosepath\pgffill
\color[rgb]{0,0,0}\pgfmoveto{\pgfxy(90.00,20.00)}\pgflineto{\pgfxy(116.00,0.00)}\pgflineto{\pgfxy(119.00,0.00)}\pgfclosepath\pgfstroke
\color[rgb]{1,1,1}\pgfellipse[fill]{\pgfxy(87.00,20.00)}{\pgfxy(0.00,0.00)}{\pgfxy(0.00,0.00)}
\color[rgb]{0,0,0}\pgfellipse[stroke]{\pgfxy(87.00,20.00)}{\pgfxy(0.00,0.00)}{\pgfxy(0.00,0.00)}
\pgfsetdash{{0.30mm}{0.50mm}}{0mm}\pgfmoveto{\pgfxy(80.00,25.00)}\pgflineto{\pgfxy(83.00,25.00)}\pgfstroke
\pgfmoveto{\pgfxy(94.00,25.00)}\pgflineto{\pgfxy(97.00,25.00)}\pgfstroke
\pgfsetdash{{2.00mm}{1.00mm}}{0mm}\pgfellipse[stroke]{\pgfxy(90.00,45.00)}{\pgfxy(15.00,0.00)}{\pgfxy(0.00,5.00)}
\pgfellipse[stroke]{\pgfxy(20.00,45.00)}{\pgfxy(15.00,0.00)}{\pgfxy(0.00,5.00)}
\pgfputat{\pgfxy(20.00,22.00)}{\pgfbox[bottom,left]{\fontsize{9.10}{10.93}\selectfont \makebox[0pt]{$v$}}}
\pgfputat{\pgfxy(90.00,22.00)}{\pgfbox[bottom,left]{\fontsize{9.10}{10.93}\selectfont \makebox[0pt]{$v$}}}
\pgfputat{\pgfxy(-5.00,47.00)}{\pgfbox[bottom,left]{\fontsize{9.10}{10.93}\selectfont $T$}}
\pgfputat{\pgfxy(65.00,47.00)}{\pgfbox[bottom,left]{\fontsize{9.10}{10.93}\selectfont $T'$}}
\pgfsetdash{}{0mm}\pgfmoveto{\pgfxy(50.00,25.00)}\pgflineto{\pgfxy(60.00,25.00)}\pgfstroke
\pgfmoveto{\pgfxy(60.00,25.00)}\pgflineto{\pgfxy(57.20,25.70)}\pgflineto{\pgfxy(60.00,25.00)}\pgflineto{\pgfxy(57.20,24.30)}\pgflineto{\pgfxy(60.00,25.00)}\pgfclosepath\pgffill
\pgfmoveto{\pgfxy(60.00,25.00)}\pgflineto{\pgfxy(57.20,25.70)}\pgflineto{\pgfxy(60.00,25.00)}\pgflineto{\pgfxy(57.20,24.30)}\pgflineto{\pgfxy(60.00,25.00)}\pgfclosepath\pgfstroke
\pgfputat{\pgfxy(43.00,14.00)}{\pgfbox[bottom,left]{\fontsize{9.10}{10.93}\selectfont \makebox[0pt]{$j$}}}
\pgfmoveto{\pgfxy(36.00,19.00)}\pgfcurveto{\pgfxy(38.26,18.28)}{\pgfxy(40.63,17.95)}{\pgfxy(43.00,18.00)}\pgfcurveto{\pgfxy(45.04,18.05)}{\pgfxy(47.06,18.38)}{\pgfxy(49.00,19.00)}\pgfstroke
\pgfmoveto{\pgfxy(-9.00,19.00)}\pgfcurveto{\pgfxy(-7.77,18.34)}{\pgfxy(-6.40,18.00)}{\pgfxy(-5.00,18.00)}\pgfcurveto{\pgfxy(-3.60,18.00)}{\pgfxy(-2.23,18.34)}{\pgfxy(-1.00,19.00)}\pgfstroke
\pgfputat{\pgfxy(-5.00,14.00)}{\pgfbox[bottom,left]{\fontsize{9.10}{10.93}\selectfont \makebox[0pt]{$i$}}}
\pgfmoveto{\pgfxy(61.00,-1.00)}\pgfcurveto{\pgfxy(62.23,-1.66)}{\pgfxy(63.60,-2.00)}{\pgfxy(65.00,-2.00)}\pgfcurveto{\pgfxy(66.40,-2.00)}{\pgfxy(67.77,-1.66)}{\pgfxy(69.00,-1.00)}\pgfstroke
\pgfputat{\pgfxy(65.00,-6.00)}{\pgfbox[bottom,left]{\fontsize{9.10}{10.93}\selectfont \makebox[0pt]{$i$}}}
\pgfputat{\pgfxy(113.00,-6.00)}{\pgfbox[bottom,left]{\fontsize{9.10}{10.93}\selectfont \makebox[0pt]{$j$}}}
\pgfmoveto{\pgfxy(106.00,-1.00)}\pgfcurveto{\pgfxy(108.26,-1.72)}{\pgfxy(110.63,-2.05)}{\pgfxy(113.00,-2.00)}\pgfcurveto{\pgfxy(115.04,-1.95)}{\pgfxy(117.06,-1.62)}{\pgfxy(119.00,-1.00)}\pgfstroke
\pgfputat{\pgfxy(20.00,42.00)}{\pgfbox[bottom,left]{\fontsize{9.10}{10.93}\selectfont \makebox[0pt]{$p$}}}
\pgfputat{\pgfxy(90.00,42.00)}{\pgfbox[bottom,left]{\fontsize{9.10}{10.93}\selectfont \makebox[0pt]{$p$}}}
\pgfcircle[fill]{\pgfxy(20.00,20.00)}{0.40mm}
\pgfcircle[stroke]{\pgfxy(20.00,20.00)}{0.40mm}
\pgfmoveto{\pgfxy(20.00,20.00)}\pgflineto{\pgfxy(11.00,0.00)}\pgflineto{\pgfxy(14.00,0.00)}\pgfclosepath\pgfstroke
\pgfputat{\pgfxy(20.00,-6.00)}{\pgfbox[bottom,left]{\fontsize{9.10}{10.93}\selectfont \makebox[0pt]{$\ge k$}}}
\pgfmoveto{\pgfxy(11.00,-1.00)}\pgfcurveto{\pgfxy(13.96,-1.60)}{\pgfxy(16.98,-1.94)}{\pgfxy(20.00,-2.00)}\pgfcurveto{\pgfxy(23.69,-2.07)}{\pgfxy(27.38,-1.74)}{\pgfxy(31.00,-1.00)}\pgfstroke
\pgfmoveto{\pgfxy(20.00,20.00)}\pgflineto{\pgfxy(16.00,0.00)}\pgflineto{\pgfxy(19.00,0.00)}\pgflineto{\pgfxy(20.00,20.00)}\pgfclosepath\pgfstroke
\pgfmoveto{\pgfxy(20.00,20.00)}\pgflineto{\pgfxy(31.00,0.00)}\pgflineto{\pgfxy(28.00,0.00)}\pgflineto{\pgfxy(20.00,20.00)}\pgfclosepath\pgfstroke
\pgfsetdash{{0.30mm}{0.50mm}}{0mm}\pgfmoveto{\pgfxy(21.00,6.00)}\pgflineto{\pgfxy(24.00,6.00)}\pgfstroke
\pgfcircle[fill]{\pgfxy(90.00,20.00)}{0.40mm}
\pgfsetdash{}{0mm}\pgfcircle[stroke]{\pgfxy(90.00,20.00)}{0.40mm}
\pgfmoveto{\pgfxy(90.00,20.00)}\pgflineto{\pgfxy(81.00,0.00)}\pgflineto{\pgfxy(84.00,0.00)}\pgfclosepath\pgfstroke
\pgfputat{\pgfxy(90.00,-6.00)}{\pgfbox[bottom,left]{\fontsize{9.10}{10.93}\selectfont \makebox[0pt]{$\ge k$}}}
\pgfmoveto{\pgfxy(81.00,-1.00)}\pgfcurveto{\pgfxy(83.96,-1.60)}{\pgfxy(86.98,-1.94)}{\pgfxy(90.00,-2.00)}\pgfcurveto{\pgfxy(93.69,-2.07)}{\pgfxy(97.38,-1.74)}{\pgfxy(101.00,-1.00)}\pgfstroke
\pgfmoveto{\pgfxy(90.00,20.00)}\pgflineto{\pgfxy(86.00,0.00)}\pgflineto{\pgfxy(89.00,0.00)}\pgflineto{\pgfxy(90.00,20.00)}\pgfclosepath\pgfstroke
\pgfmoveto{\pgfxy(90.00,20.00)}\pgflineto{\pgfxy(101.00,0.00)}\pgflineto{\pgfxy(98.00,0.00)}\pgflineto{\pgfxy(90.00,20.00)}\pgfclosepath\pgfstroke
\pgfsetdash{{0.30mm}{0.50mm}}{0mm}\pgfmoveto{\pgfxy(91.00,6.00)}\pgflineto{\pgfxy(94.00,6.00)}\pgfstroke
\end{pgfpicture}%
\caption{Cut-and-paste bijection $\gamma_{i,j}$}
\label{fig:cut-and-paste}
\end{figure}

Since that $v$ has at least $i+j+k$ children means that the outdegree of $v$ greater than or equal to $\alpha = \dfrac{i+j+k}{d}$, this case corresponds to $k \leftarrow \alpha$ of Theorem~\ref{thm:main1}.
\end{proof}

In Theorem~\ref{thm:main2}, what to find is the cardinality of $\Vnd(i,j,k;\ell)$ for any two nonnegative integers $i$, $j$, one nonnegative integer $k$ which is a multiple of $d$, and one positive integer $\ell$.

Given $(T, v) \in \Vnd(i,j,k;\ell)$, let $w$ be the $j$th younger sibling of $v$.
By exchanging two subtrees $D_v$ and $D_w$,
we obtain $(T', v)$ in $\Vnd(i+j,0,k;\ell)$ from $(T, v)$ in $\Vnd(i,j,k;\ell)$.
Let $\alpha$ and $\beta$ be the quotient and the remainder when $i+j+k$ is divided by $d$, that is, $$i+j+k = \alpha d +\beta.$$
By applying the cut-and-paste bijection $\gamma_{i+j-\beta, 0}$,
we obtain $(T'',v)$ in  $\Vnd(\beta,0,\alpha d;\ell)$ from $(T', v)$ in $\Vnd(i+j,0,k;\ell)$.
One can show that the values
$$\#\Vnd(i,0,\alpha d;\ell)-\#\Vnd(i+1,0,\alpha d;\ell)$$
are the same for all $0\le i \le d-1$
under exchanging two descendant subtrees of two sibling in the same tuplet.
By telescoping, we get the formula
\begin{align*}
&\#\Vnd(0,0,\alpha d;\ell)-\#\Vnd(\beta,0,\alpha d;\ell)\\
&=
\frac{\beta}{d}
\left[
\#\Vnd(0,0,\alpha d;\ell)
- \#\Vnd(d,0,\alpha d;\ell)
\right].
\end{align*}
By Lemma~\ref{eq:lemma}, we have
\begin{align*}
\#\Vnd(0,0,\alpha d;\ell) &= d^\ell \binom{(d+1)n-\alpha}{dn+\ell}, \\
\#\Vnd(d,0,\alpha d;\ell) &=  d^\ell \binom{(d+1)n-\alpha-1}{dn+\ell}.
\end{align*}
Thus we get the cardinality of $\Vnd(\beta,0,\alpha d;\ell)$ and the desired formula~\eqref{eq:refinement}.







\section{Further results}
\label{sec:coro}
From Theorem~\ref{thm:main1}, we can obtain the following result.
\begin{cor}
Given $n \ge 1$, for any two nonnegative integers $k$ and $\ell$, the number of all vertices of outdegree $k$ at level $\ell$ among $d$-trees in $\mathcal{T}_n^{(d)}$ is
\begin{align}
\label{eq:coro}
d^\ell \frac{dk+(d+1)\ell}{(d+1)n-k} \binom{(d+1)n-k}{dn+\ell}.
\end{align}
\end{cor}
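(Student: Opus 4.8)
The plan is to derive the exact‑count formula \eqref{eq:coro} from the ``at least'' formula \eqref{eq:KSS16} of Theorem~\ref{thm:main1} by a two‑dimensional inclusion--exclusion, followed by an elementary simplification with Pascal's rule. Write $f(k,\ell)=d^\ell\binom{(d+1)n-k}{dn+\ell}$ for the number of vertices of outdegree $\ge k$ at level $\ge\ell$ among all trees in $\Tnd$, as given by Theorem~\ref{thm:main1}; this is valid for all nonnegative integers $k,\ell$, the binomial coefficient vanishing exactly when $n-k-\ell<0$, which matches the fact that no such vertex exists. For a single pair $(T,v)$ one has the identity of indicators
$$\bigl[\outdeg(v)=k\bigr]\bigl[\lev(v)=\ell\bigr]=\Bigl(\bigl[\outdeg(v)\ge k\bigr]-\bigl[\outdeg(v)\ge k+1\bigr]\Bigr)\Bigl(\bigl[\lev(v)\ge\ell\bigr]-\bigl[\lev(v)\ge\ell+1\bigr]\Bigr),$$
so summing over all vertices $v$ of all trees $T\in\Tnd$ expresses the desired number as $f(k,\ell)-f(k+1,\ell)-f(k,\ell+1)+f(k+1,\ell+1)$.

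Next I would substitute the closed form and factor out $d^\ell$, leaving the bracketed quantity
$$\binom{(d+1)n-k}{dn+\ell}-\binom{(d+1)n-k-1}{dn+\ell}-d\binom{(d+1)n-k}{dn+\ell+1}+d\binom{(d+1)n-k-1}{dn+\ell+1}.$$
Applying Pascal's rule $\binom{N}{M}-\binom{N-1}{M}=\binom{N-1}{M-1}$ to the first pair (with $N=(d+1)n-k$, $M=dn+\ell$) and to the second pair (the same with $M$ replaced by $dn+\ell+1$) collapses this to $\binom{(d+1)n-k-1}{dn+\ell-1}-d\binom{(d+1)n-k-1}{dn+\ell}$. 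Pulling out $\binom{(d+1)n-k-1}{dn+\ell-1}$ and using the ratio $\binom{(d+1)n-k-1}{dn+\ell}/\binom{(d+1)n-k-1}{dn+\ell-1}=(n-k-\ell)/(dn+\ell)$ turns the bracket into $\dfrac{dk+(d+1)\ell}{dn+\ell}\binom{(d+1)n-k-1}{dn+\ell-1}$; finally the identity $\binom{(d+1)n-k-1}{dn+\ell-1}=\dfrac{dn+\ell}{(d+1)n-k}\binom{(d+1)n-k}{dn+\ell}$ yields exactly \eqref{eq:coro} after restoring the factor $d^\ell$.

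There is essentially no deep step here: after invoking Theorem~\ref{thm:main1} the argument is pure bookkeeping with binomial coefficients. The one point deserving care — and what I would treat as the ``main obstacle'' — is checking that the inclusion--exclusion and the binomial identities remain valid at the boundary of the parameter range, e.g.\ when one or more of $n-k-\ell$, $dn+\ell$, $(d+1)n-k$ is small, or when $\ell=0$ (so that no level $<\ell$ ever appears). In those degenerate cases the binomial coefficients are interpreted by the standard convention $\binom ab=0$ for $b<0$ or $b>a$, each of the four terms $f(k+\epsilon_1,\ell+\epsilon_2)$ is still given by the Theorem~\ref{thm:main1} formula, and one verifies that Pascal's rule and the elementary ratio manipulations above continue to hold numerically, so the telescoping computation goes through verbatim. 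I would conclude by noting the special value $\ell=0$, $k$ arbitrary, which recovers the count of vertices of outdegree $k$ that are the root, consistent with the $\ell=0$ row of Table~\ref{table:vertices}, as a sanity check.
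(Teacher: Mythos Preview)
Your proposal is correct and follows exactly the paper's approach: the paper's proof also applies inclusion--exclusion (``the sieve method'') to the formula \eqref{eq:KSS16} of Theorem~\ref{thm:main1}, writing the desired count as the same four-term alternating sum $f(k,\ell)-f(k+1,\ell)-f(k,\ell+1)+f(k+1,\ell+1)$ that you obtain. The paper then simply asserts that this simplifies to \eqref{eq:coro}, whereas you spell out the Pascal's-rule reduction and the binomial ratio manipulations in full; your added discussion of the boundary cases is a welcome bit of care that the paper omits.
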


\begin{proof}
By the sieve method with \eqref{eq:KSS16}, we obtain the formula \eqref{eq:coro} from
\begin{multline*}
d^\ell \binom{(d+1)n-k}{dn+\ell}
- d^\ell \binom{(d+1)n-k-1}{dn+\ell} \\
- d^{\ell+1} \binom{(d+1)n-k}{dn+\ell+1}
+ d^{\ell+1} \binom{(d+1)n-k-1}{dn+\ell+1}.
\end{multline*}
\end{proof}

The next result follows from Theorem~\ref{thm:main2} for $d=1$.
\begin{cor}
Given $n \ge 1$, for any three nonnegative integers $i$, $j$, $k$, and one positive integer $\ell$,
the number of all vertices among trees in $\T_n$ such that
\begin{itemize}
\item having at least $i$ elder siblings,
\item having at least $j$ younger siblings,
\item having at least $k$ children,
\item at level $\ge \ell$
\end{itemize}
is
\begin{align*}
\binom{2n-i-j-k}{n+\ell}.
\end{align*}
\end{cor}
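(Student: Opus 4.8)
The plan is to obtain this corollary as the special case $d=1$ of Theorem~\ref{thm:main2}, so the real work is only to check that the hypotheses and the right-hand side specialize correctly. First I would recall the identification made in the introduction: a $1$-tuplet is a $2$-gon, i.e.\ an ordinary edge, and the bijection between rooted ordered $d$-trees and rooted $d$-tuplet trees becomes, for $d=1$, the identity between rooted ordered trees with $n$ edges and rooted $1$-tuplet trees with $n$ tuplets, so $\T_n^{(1)}=\T_n$. Under this identification the notions of parent, child, elder/younger sibling, and level all coincide, so the set counted in the corollary is exactly $\Vn^{(1)}(i,j,k;\ell)$ in the notation of Section~\ref{sec:calculation}. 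Moreover, in Theorem~\ref{thm:main2} the requirement that $k$ be a multiple of $d$ is vacuous when $d=1$, so that theorem already covers every triple of nonnegative integers $i,j,k$ together with every positive integer $\ell$, which is precisely the generality asserted here.

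Next I would unwind the auxiliary integers $\alpha$ and $\beta$ appearing in formula~\eqref{eq:refinement}: they are defined by $i+j+k=\alpha d+\beta$ with $0\le\beta<d$, and for $d=1$ this forces $\beta=0$ and $\alpha=i+j+k$. Substituting $d=1$, $\beta=0$, and $\alpha=i+j+k$ into \eqref{eq:refinement} then gives
\[
1^{\ell}\left(1-\frac{0}{1}\cdot\frac{n+\ell}{2n-(i+j+k)}\right)\binom{2n-(i+j+k)}{n+\ell}=\binom{2n-i-j-k}{n+\ell},
\]
which is exactly the claimed count, so this completes the proof.

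Since the whole argument is a direct specialization, there is no serious obstacle; the only point worth a line of care is the dictionary between $\T_n^{(1)}$ and $\T_n$, namely verifying that all four defining conditions (at least $i$ elder siblings, at least $j$ younger siblings, at least $k$ children, level $\ge\ell$) correspond on the nose under the bijection of the introduction, and that the edge condition $\ell\ge 1$ imposed in Theorem~\ref{thm:main2} is respected. If one preferred a self-contained derivation, one could instead rerun the telescoping and cut-and-paste argument of Section~\ref{sec:calculation} verbatim with $d=1$ and use the $d=1$ case of Theorem~\ref{thm:main1}; but the specialization above is the shortest route.
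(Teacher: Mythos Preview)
Your proposal is correct and takes essentially the same approach as the paper, which simply records that the corollary follows from Theorem~\ref{thm:main2} by setting $d=1$. Your write-up is in fact more detailed than the paper's, spelling out explicitly that $\beta=0$, $\alpha=i+j+k$, and that the divisibility hypothesis on $k$ becomes vacuous.
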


\section*{Acknowledgements}
For the third author, this work was supported by the National Research Foundation of Korea(NRF) grant funded by the Korea government(MSIP) (No. 2017R1C1B2008269).


\bibliographystyle{alpha}

\end{document}